\tikzstyle directed=[postaction={decorate,decoration={markings,
    mark=at position #1 with {\arrow{>}}}}]
\tikzstyle rdirected=[postaction={decorate,decoration={markings,
    mark=at position #1 with {\arrow{<}}}}]
\tikzset{anchorbase/.style={baseline={([yshift=-0.5ex]current bounding box.center)}}}
\tikzset{
    partial ellipse/.style args={#1:#2:#3}{
        insert path={+ (#1:#3) arc (#1:#2:#3)}
    }
  }
\crefname{figure}{Figure}{Figures}
\newcommand{\C}{\mathbb{C}}
\newcommand{\Z}{\mathbb{Z}}
\newcommand{\B}{{\bf B}}
\newcommand{\calC}{\mathcal{C}}
\newcommand{\id}{\mathrm{id}}
\DeclareMathOperator{\Cone}{Cone}
\DeclareMathOperator{\Hom}{Hom}
\DeclareMathOperator{\End}{End}
\newcommand{\HOM}{\ensuremath{\Hom_{\mathcal{C}_\Gamma}}}
\newcommand{\END}{\ensuremath{\End_{\mathcal{C}_{\Gamma}}}}
\renewcommand{\tilde}{\widetilde}
\theoremstyle{theorem}
\newtheorem{theorem}{Theorem}[section]
\newtheorem{lemma}[theorem]{Lemma}
\newtheorem{corollary}[theorem]{Corollary}
\newtheorem{remark}[theorem]{Remark}
\newtheorem{definition}[theorem]{Definition}
\newtheorem{example}[theorem]{Example}
\title[Burau representation of Artin--Tits groups]{Some remarks about the faithfulness of the Burau representation of Artin--Tits groups}
\author{Asilata Bapat}
\address{Mathematical Sciences Institute \\ Australian National University}
\email{asilata.bapat@anu.edu.au}
\author{Hoel Queffelec}
\address{IMAG\\ Univ. Montpellier\\ CNRS \\ Montpellier \\ France \\ {\it and} Mathematical Sciences Institute \\ Australian National University}
\email{hoel.queffelec@umontpellier.fr}
\begin{document}

\maketitle

\begin{abstract}
  We discuss the extension of the faithfulness question for the Burau representation of braid groups to the case of Artin--Tits groups. We prove that the Burau representation is not faithful in affine type $\tilde{A_3}$, and not faithful over several finite rings in type $D_4$, using an algorithmic approach based on categorical methods that generalize Bigelow's curve strategy outside of type $A$.
\end{abstract}

\section{Introduction}

The Burau representation, that was defined 90 years ago~\cite{Burau}, has for long been the major tool in the study of braids, in relation to the Alexander polynomials of knots. Despite meritorious efforts, its faithfulness is not completely known: it is a classical result (see \cite{MagnusPeluso} and the discussion in~\cite{Birman_book}) that it is faithful in type $A_2$ (3-strand braid group), but the fact that it is not faithful in type $A_4$ and above was only discovered step by step in the 90's~\cite{Moody,LongPaton,Bigelow_Burau}. And type $A_3$ still resists all efforts~\cite{CooperLong,BeridzeTraczyk,Datta,GibsonWilliamsonYacobi}, including computer-based ones~\cite{FullartonShadrach} (see also the discussion in~\cite{BharathramBirman}).

In this paper we build on the observation that the definition of the Burau representation makes perfect sense outside of type $A$. In this larger generality, faithfulness is open in a fairly large class of cases. We investigated the first cases using computational methods, generalizing Bigelow's curve-based approach outside of type $A$, where Khovanov--Seidel's category~\cite{KhS} replaces curves when we lack geometric foundations. It should be highlighted that despite its theoretical definition, this categorical representation is completely computable.

In~\cref{thm:unfaithful-affine-a3}, we exhibit a counterexample to faithfulness in affine type \(A_3\).
In~\cref{thm:d4-unfaithful-finite-rings}, we exhibit a range of counterexamples over several finite rings in type \(D_4\).
These counterexamples were found by running Python and Sage searches on the Australian National Computational Infrastructure (NCI Australia)~\cite{nci}.
The counterexample in affine type \(A_3\) was found using a variation of Bigelow's strategy from~\cite{Bigelow_Burau}.
The counterexamples in type \(D_4\) were found by mixing variants of Bigelow's strategy with the one developed by Gibson, Williamson and Yacobi in~\cite{GibsonWilliamsonYacobi}.
Our code is available on a Git repository~\cite{BQ_git}.
To our surprise, all other cases resisted our investigations over $\Z$.

\subsection*{Acknowledgements}
We would like to warmly thank Anand Deopurkar, Anthony Licata and Thomas Haettel for countless valuable discussions, and we are grateful to Oded Yacobi for sharing insights about the strategy used in~\cite{GibsonWilliamsonYacobi}. Many thanks as well to Pascal Azerad and Cl\'ement Maria for coding tips.
H.Q. has been funded by the European Union's Horizon 2020 research and innovation programme under the Marie Sklodowska-Curie grant agreement No 101064705.
A.B. was partly supported by the Australian Research Council grants DE240100447 and DP240101084.
This research was undertaken with the use of the National Computational Infrastructure (NCI Australia).
NCI Australia is enabled by the National Collaborative Research Infrastructure Strategy (NCRIS).

\section{The (generalised) Burau representation}
In this section we recall the definition of the Burau representation for the Artin--Tits group associated to any Coxeter diagram. Recall that such a Coxeter diagram $\Gamma$ is a loop-free finite graph with vertex set $V(\Gamma)$ and unoriented edges.
Further, each edge \(i-j\) is labelled by $m_{i,j}$ in $\{3,\ldots,\infty\}$.
Then the associated Artin--Tits group $\B(\Gamma)$ is the group presented as:
\[
\B(\Gamma):=\langle \sigma_i\text{ for }i\in V(\Gamma) \mid \underbrace{\sigma_i\sigma_j\cdots}_{m_{ij}}=\underbrace{\sigma_j\sigma_i\cdots}_{m_{ij}}\rangle.
\]
By convention, $m_{ij}=\infty$ means that there is no relation involving the letters $\sigma_i$ and $\sigma_j$.
If $i$ and $j$ are not adjacent, we set $m_{ij}=2$.

The associated Coxeter group has the same presentation, with the added relation that all generators are of order $2$:
\[
G_\Gamma:=\B(\Gamma)/\langle \sigma_i^2=1\rangle.
\]

We recall the \emph{geometric representation} of the Coxeter group \(G_{\Gamma}\) (see, e.g.,~\cite[Sect.~4.2]{BjornerBrenti}), from which one can study the group properties.
The underlying complex vector space \(V_{1,\Gamma}\) is freely generated by the roots \(\{\alpha_i \mid i \in V(\Gamma)\}\):
\[
V_{1, \Gamma}=\mathbb{C}\langle \alpha_i\rangle_{i \in V(\Gamma)},
\]
and is equipped with a non-degenerate pairing as follows:
\[
\langle \alpha_i,\alpha_j\rangle =
\begin{cases}
  2 & \text{if } i=j; \\
  0 & \text{if } m_{ij}=2\;\text{($i$ and $j$ not adjacent)};\\
  -2\cos(\frac{\pi}{m_{ij}})& \text{if}\;m_{ij}\neq \infty; \\
  -2& \text{if } m_{ij}=\infty.
\end{cases}
\]

Note that the $m_{ij}=2$ case is compatible with the definition for general $m_{ij}$.
Now $G_\Gamma$ acts on $V_{1, \Gamma}$ by reflections via the following formula:
\[
s_i(\alpha_j)=\alpha_j-\langle \alpha_i,\alpha_j\rangle \alpha_i.
\]
A key point is that this representation is faithful (see for example~\cite[Thm 4.2.7]{BjornerBrenti}).

Passing to Artin--Tits groups, this geometric representation $q$-deforms into the Burau representation.
Let us first consider \(V_{q, \Gamma}\), which is the \(\mathbb{C}(q)\)-vector space \(\mathbb{C}(q) \otimes_{\mathbb{C}}V_{1, \Gamma}\).
Then \(V_{q, \Gamma}\) has a \(q\)-deformed pairing, defined as follows.
On the basis elements, we set:
\[
\langle \alpha_i,\alpha_j\rangle =\begin{cases}
1+q^2 \;\text{if}\;i=j \\
2q\cos(\frac{\pi}{m_{ij}})\; \text{if}\;i\neq j\;\text{and}\; m_{ij}<\infty \\
2q\;\text{if}\;m_{ij}=\infty.
\end{cases}
\]
The pairing is extended to \(V_{q,\Gamma}\) by \(\mathbb{C}\)-bilinearity, and sesquilinearity with respect to \(q\) as follows:
\[\langle qx,y \rangle = q^{-1}\langle x,y \rangle \text{ and } \langle x,qy \rangle = q\langle x,y \rangle\]
Taking $q=-1$ takes us back to the undeformed setting.

\begin{definition}\label{def:burau-rep}
  The \emph{Burau representation} is an action of $\B(\Gamma)$ on $V_{q, \Gamma}$ defined on generators $\sigma_i\in \B(\Gamma)$ by:
  \[
    \sigma_i(\alpha_j)=\alpha_j-\langle \alpha_i,\alpha_j\rangle \alpha_i.
  \]
\end{definition}
One can check the following formula for the inverses of the generators:
\[
\sigma_i^{-1}(\alpha_j)=\alpha_j-q^{-2}\langle \alpha_j,\alpha_i\rangle \alpha_i.
\]

\begin{comment}
\begin{remark}
  The action of \(\B(\Gamma)\) via the Burau representation factors through the Hecke algebra, which is a quadratic quotient of the group ring of the braid group.
  In type $A$, this is also related to the following appearance of the Burau representation. Consider $U=\C(q)^2$ the 2-dimensional irreducible representation of $U_q(\mathfrak{sl}_2)$. Then Schur-Weyl duality asserts that there is an action of the braid group $B(A_{n-1})$ on $U^{\otimes n}$, dual to the one of $U_q(\mathfrak{sl}_2)$. Decomposing this $n$-fold tensor into $\mathfrak{sl}_2$ weight spaces, one gets:
  \[
U^{\otimes n}=\bigoplus_{i= 0.. n} W_{-n+2i}
\]
The $\B(A_{n-1})$ action respects the direct sum decomposition, and one can check that $W_{n-2}$ is the Burau representation.
\end{remark}
\end{comment}

\begin{example}
  In simply-laced type, one can check that:
  \[
    \sigma_i(\alpha_j)=\begin{cases} -q^2\alpha_i \;\text{if}\; i=j, \\
      \alpha_i-q\alpha_j \;\text{if}\; m_{i,j}=3, \\
      \alpha_j\;\text{otherwise}.
      \end{cases}
    \]
\end{example}

\section{Summary of results and approaches}

\subsection{Faithfulness questions for generalized Burau representations}

Studying the faithfulness of the Burau representation in type $A$ has for long been a tantalizing question.
It is known to be faithful in type \(A_2\), which corresponds to the 3-strand braid group (see~\cite{MagnusPeluso} or~\cite{Birman_book} and references therein), and unfaithful for at least 5 strands~\cite{Moody, LongPaton,Bigelow_Burau}.
The $4$-strand case (corresponding to type \(A_3\)) is open.

In light of the generalization of the Burau representation to larger types, it is a natural question to ask about faithfulness in other types. With type $A_3$ in mind, being able to identify a kernel in nearby types (like $D_4$, $\tilde{A_3}$, \dots) might help identifying properties of elements in the kernel that would yield proof strategies for faithfulness.

In~\cref{thm:unfaithful-affine-a3}, we show that the Burau representation is not faithful over \(\mathbb{Z}\) in affine type $\tilde{A_3}$.
In~\cref{thm:d4-unfaithful-finite-rings} we show that the Burau representation is not faithful in type $D_4$ over finite rings $\Z/p\Z$ with $p\leq 16$.

Towards approaching more general types, the following observation is clear from the fact that Artin--Tits groups include into each other in a way that is compatible with the Burau representation.
Recall that a subgraph of an undirected graph is called a \emph{full subgraph} if its vertex set is a subset of the vertices, and its edge set contains all of the edges in the original graph between any pair of vertices in the subset.
\begin{lemma}
  If a Coxeter graph $\Gamma$ contains $A_4$ or \(\widetilde{A_3}\) as a full subgraph, then the Burau representation of $B_\Gamma$ is not faithful.
\end{lemma}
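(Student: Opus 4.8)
The plan is to deduce the statement from the two facts recalled just above: for a full subgraph $\Gamma'\subseteq\Gamma$, the Artin--Tits group $\B(\Gamma')$ includes into $\B(\Gamma)$, and this inclusion intertwines the Burau representations. Write $\iota\colon\B(\Gamma')\hookrightarrow\B(\Gamma)$, $\sigma_i\mapsto\sigma_i$, for this inclusion; it is injective, since standard parabolic subgroups of Artin--Tits groups are themselves Artin--Tits groups on the corresponding full subgraphs (van der Lek). The hypothesis that $\Gamma'$ is \emph{full} enters here: the labels $m_{ij}$ with $i,j\in V(\Gamma')$ are the same whether computed in $\Gamma'$ or in $\Gamma$, so the subspace $V_{q,\Gamma'}:=\C(q)\langle\alpha_i:i\in V(\Gamma')\rangle$ of $V_{q,\Gamma}$ inherits the $q$-deformed pairing, and by the formula $\sigma_i(\alpha_j)=\alpha_j-\langle\alpha_i,\alpha_j\rangle\alpha_i$ (and its inverse) it is preserved by each $\sigma_i$ with $i\in V(\Gamma')$, the induced action being exactly the Burau representation of $\B(\Gamma')$. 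I will write $\rho_\Gamma$ and $\rho_{\Gamma'}$ for the two Burau representations.

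The argument then runs as follows: pick a nontrivial $g\in\ker\rho_{\Gamma'}$ — one exists for $\Gamma'=A_4$ by~\cite{Moody,LongPaton,Bigelow_Burau} and for $\Gamma'=\widetilde{A_3}$ by~\cref{thm:unfaithful-affine-a3} — so that $\iota(g)\ne 1$ and $\iota(g)$ fixes $V_{q,\Gamma'}$ pointwise; it then remains to conclude that $\rho_\Gamma(\iota(g))=\mathrm{id}$ on all of $V_{q,\Gamma}$. This last implication is where I expect the real work, and it is the only step that is not a direct unwinding of definitions: a priori $\rho_\Gamma|_{\iota(\B(\Gamma'))}$ only \emph{contains} $\rho_{\Gamma'}$ as a subrepresentation, and an extension of trivial modules by $\rho_{\Gamma'}$ could contribute a unipotent part on the extra roots $\alpha_k$, $k\notin V(\Gamma')$, that $\iota(g)$ does not kill. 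To rule this out I would argue as follows. Reading the defining formula modulo $V_{q,\Gamma'}$ gives $\sigma_i(\alpha_k)\equiv\alpha_k$ for $i\in V(\Gamma')$, $k\notin V(\Gamma')$, so $V_{q,\Gamma}/V_{q,\Gamma'}$ is a \emph{trivial} $\B(\Gamma')$-module. A direct computation with the defining formula shows that the $q$-deformed pairing is Burau-invariant, $\langle\sigma_i x,\sigma_i y\rangle=\langle x,y\rangle$; it reduces to the identity $\langle\alpha_i,\alpha_j\rangle=q^{2}\overline{\langle\alpha_i,\alpha_j\rangle}$, where $\overline{f(q)}=f(q^{-1})$, which one reads off the cases in the definition of the pairing. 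Moreover the Gram matrix of the pairing on $V_{q,\Gamma'}$ specialises to the identity at $q=0$, so its determinant is a nonzero element of $\C(q)$ and the pairing is non-degenerate on $V_{q,\Gamma'}$; hence $V_{q,\Gamma}=V_{q,\Gamma'}\oplus V_{q,\Gamma'}^{\perp}$, the orthogonal complement taken with respect to the pairing on $V_{q,\Gamma}$. Since each $\sigma_i$ ($i\in V(\Gamma')$) preserves both $V_{q,\Gamma'}$ and the pairing, $\B(\Gamma')$ preserves $V_{q,\Gamma'}^{\perp}$, which then maps $\B(\Gamma')$-equivariantly and isomorphically onto the trivial module $V_{q,\Gamma}/V_{q,\Gamma'}$. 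Therefore $\rho_\Gamma|_{\iota(\B(\Gamma'))}\cong\rho_{\Gamma'}\oplus(\mathrm{trivial})$, so $\rho_\Gamma(\iota(g))=\mathrm{id}$, and $\iota(g)$ is a nontrivial element of $\ker\rho_\Gamma$.

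In summary, the inclusion $\iota$ and the identification of $\rho_{\Gamma'}$ as a subrepresentation of $\rho_\Gamma|_{\iota(\B(\Gamma'))}$ are routine once one observes that fullness makes the edge labels, and hence the pairing, restrict; the genuine content is the invariance and non-degeneracy of the deformed pairing on $V_{q,\Gamma'}$, which together guarantee that the kernel element of the subrepresentation acts trivially on the whole space rather than unipotently on the remaining roots. Applying the above with $\Gamma'\in\{A_4,\widetilde{A_3}\}$ produces a nontrivial element of $\ker\rho_\Gamma$, so the Burau representation of $\B(\Gamma)$ is not faithful.
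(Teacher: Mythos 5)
Your proof is correct and follows essentially the same route as the paper, which simply invokes the compatible inclusion of Artin--Tits groups for full subgraphs and the known non-faithfulness in types $A_4$ and $\widetilde{A_3}$. The paper's proof is a one-liner, so your write-up usefully supplies the details it leaves implicit — injectivity of the parabolic inclusion (van der Lek), and the invariance plus non-degeneracy of the $q$-deformed pairing showing that the restricted representation splits as $\rho_{\Gamma'}\oplus(\mathrm{trivial})$ rather than merely containing $\rho_{\Gamma'}$ as a subrepresentation (one can also see this last point directly: for $x\in V_{q,\Gamma'}^{\perp}$ and $i\in V(\Gamma')$ the defining formula gives $\sigma_i(x)=x-\langle\alpha_i,x\rangle\alpha_i=x$).
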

\begin{proof}
  We know from existing results that the Burau representation is not faithful in type \(A_n\) for \(n = 4\) and beyond, and we know from~\cref{thm:unfaithful-affine-a3} that it is not faithful in type \(\widetilde{A_3}\).
  Thus for any graph that contains these as full subgraphs, the kernel of the Burau representation will be non-trivial.
\end{proof}
Combined with the fact that faithfulness is not known in type \(A_3\), this lemma leaves us with a large range of open cases.
Namely, all Coxeter graphs \(\Gamma\) that do not contain \(A_4\) or \(\widetilde{A_3}\) as full subgraphs.
Among these, we investigated the following ones (we restricted to the simply-laced case).
\begin{multicols}{2}
\begin{itemize}
\item $D_4 \quad
  \begin{tikzpicture}[anchorbase, scale=.7] 
    \node (1) at (1,0) {\textbullet};
    \node (2) at (2,0) {\textbullet};
    \node (3) at (3,.5) {\textbullet};
    \node (4) at (3,-.5) {\textbullet};
    \draw (1.center) -- (2.center);
    \draw (2.center) -- (3.center);
    \draw (2.center) -- (4.center);
  \end{tikzpicture}$
\item $\tilde{D_4} \quad
  \begin{tikzpicture}[anchorbase, scale=.7] 
    \node (0) at (1,.5) {\textbullet};
    \node (1) at (1,-.5) {\textbullet};
    \node (2) at (2,0) {\textbullet};
    \node (3) at (3,.5) {\textbullet};
    \node (4) at (3,-.5) {\textbullet};
    \draw (0.center) -- (2.center);
    \draw (1.center) -- (2.center);
    \draw (2.center) -- (3.center);
    \draw (2.center) -- (4.center);
  \end{tikzpicture}$
\item $\tilde{A_2}  \quad \begin{tikzpicture}[anchorbase, scale=.7] 
    \node (0) at (.8,-1) {\textbullet};
    \node (1) at (0,0) {\textbullet};
    \node (2) at (1.6,0) {\textbullet};
    \draw (0.center) -- (1.center);
    \draw (2.center) -- (0.center);
    \draw (1.center) -- (2.center);
  \end{tikzpicture}$
\item $\tilde{AE_4}  \quad \begin{tikzpicture}[anchorbase, scale=.7] 
    \node (1) at (0,0) {\textbullet};
    \node (2) at (0,-1) {\textbullet};
    \node (3) at (1,-1) {\textbullet};
    \node (4) at (1,0) {\textbullet};
    \draw (1.center) -- (2.center);
    \draw (1.center) -- (3.center);
    \draw (2.center) -- (3.center);
    \draw (3.center) -- (4.center);
  \end{tikzpicture}$
\item $\tilde{AE_4}  \quad \begin{tikzpicture}[anchorbase, scale=.7] 
    \node (1) at (0,0) {\textbullet};
    \node (2) at (0,-1) {\textbullet};
    \node (3) at (1,-1) {\textbullet};
    \node (4) at (1,0) {\textbullet};
    \draw (1.center) -- (2.center);
    \draw (1.center) -- (3.center);
    \draw (2.center) -- (3.center);
    \draw (3.center) -- (4.center);
  \end{tikzpicture}$
\item checked box: $ \begin{tikzpicture}[anchorbase, scale=.7] 
    \node (1) at (0,0) {\textbullet};
    \node (2) at (0,-1) {\textbullet};
    \node (3) at (1,-1) {\textbullet};
    \node (4) at (1,0) {\textbullet};
    \draw (1.center) -- (2.center);
    \draw (1.center) -- (3.center);
    \draw (1.center) -- (4.center);
    \draw (2.center) -- (3.center);
    \draw (3.center) -- (4.center);
  \end{tikzpicture}$
\item $K_4\quad  \begin{tikzpicture}[anchorbase, scale=.7] 
    \node (1) at (45:1) {\textbullet};
    \node (2) at (135:1) {\textbullet};
    \node (3) at (-135:1) {\textbullet};
    \node (4) at (-45:1) {\textbullet};
    \draw (1.center) -- (2.center);
    \draw (1.center) -- (3.center);
    \draw (1.center) -- (4.center);
    \draw (2.center) -- (3.center);
    \draw (2.center) -- (4.center);
    \draw (3.center) -- (4.center);
\end{tikzpicture}$
\item $K_5\quad  \begin{tikzpicture}[anchorbase, scale=.7] 
    \node (1) at (0:1) {\textbullet};
    \node (2) at (72:1) {\textbullet};
    \node (3) at (144:1) {\textbullet};
    \node (4) at (216:1) {\textbullet};
    \node (5) at (288:1) {\textbullet};
    \draw (1.center) -- (2.center);
    \draw (1.center) -- (3.center);
    \draw (1.center) -- (4.center);
    \draw (1.center) -- (5.center);
    \draw (2.center) -- (3.center);
    \draw (2.center) -- (4.center);
    \draw (2.center) -- (5.center);
    \draw (3.center) -- (4.center);
    \draw (3.center) -- (5.center);
    \draw (4.center) -- (5.center);
\end{tikzpicture}$
\item $K_6\quad  \begin{tikzpicture}[anchorbase, scale=.7] 
    \node (1) at (0:1) {\textbullet};
    \node (2) at (60:1) {\textbullet};
    \node (3) at (120:1) {\textbullet};
    \node (4) at (180:1) {\textbullet};
    \node (5) at (240:1) {\textbullet};
    \node (6) at (300:1) {\textbullet};
    \draw (1.center) -- (2.center);
    \draw (1.center) -- (3.center);
    \draw (1.center) -- (4.center);
    \draw (1.center) -- (5.center);
    \draw (1.center) -- (6.center);
    \draw (2.center) -- (3.center);
    \draw (2.center) -- (4.center);
    \draw (2.center) -- (5.center);
    \draw (2.center) -- (6.center);
    \draw (3.center) -- (4.center);
    \draw (3.center) -- (5.center);
    \draw (3.center) -- (6.center);
    \draw (4.center) -- (5.center);
    \draw (4.center) -- (6.center);
    \draw (5.center) -- (6.center);
\end{tikzpicture}$
\end{itemize}
\end{multicols}
A number of other graphs are possible, including non-simply-laced ones, as well as more complicated simply-laced ones such as many-pointed stars (continuing from \(D_4\) and \(\widetilde{D}_4\)), and complete graphs \(K_n\) of higher order.
As far as we are aware, all of these cases (and other similar ones) remain open over \(\mathbb{Z}\).
It was surprising to us that none of the other possibilities gave us any answers.

\subsection{Proof strategies (after Bigelow)}\label{subsec:proof-strategies}
For both~\cref{thm:unfaithful-affine-a3,thm:d4-unfaithful-finite-rings}, we used adaptations of Bigelow's proof strategy from~\cite{Bigelow_Burau} for the lack of faithfulness in type $A_4$ and above.
We recall this strategy now, together with our modified versions.
The proof is based on the fact that the braid group $\B_n=\B_{A_{n-1}}$ is the mapping class group of the disk with $n$ points removed, which acts transitively on simple curves connecting two of the points.
See~\cref{fig:actiononcurves} for an illustration; note that the numbering of the points is fixed, and we do not carry it along when we act on curves. The centralizer of the full set of curves is the center of the braid group, generated by the full twist.
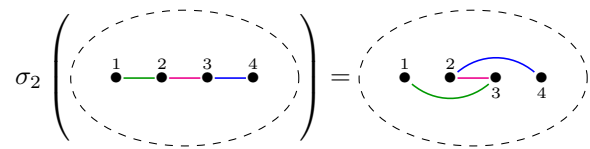
\begin{figure}[!h]
  \[
  \sigma_2\left(
    \begin{tikzpicture}[anchorbase,scale=.6]
    \tikzstyle{point}=[inner sep=0em]      
    \draw[dashed] (0,0) ellipse (2.5cm and 1.5cm);
    \node[point] (1) at (-1.5,0) {$\bullet$};
    \node[point] (2) at (-.5,0) {$\bullet$};
    \node[point] (3) at (.5,0) {$\bullet$};
    \node[point] (4) at (1.5,0) {$\bullet$};
    \node [above] at (1) {\tiny $1$};
    \node [above] at (2) {\tiny $2$};
    \node [above] at (3) {\tiny $3$};
    \node [above] at (4) {\tiny $4$};
    \draw [green!60!black, semithick] (1) to (2);
    \draw [magenta, semithick] (2) -- (3);
    \draw [blue, semithick] (3) -- (4);
  \end{tikzpicture}
  \right) =
  \begin{tikzpicture}[anchorbase,scale=.6]
    \tikzstyle{point}=[inner sep=0em]          
    \draw[dashed] (0,0) ellipse (2.5cm and 1.5cm);    
    \node[point] (1) at (-1.5,0) {$\bullet$};
    \node[point] (2) at (-.5,0) {$\bullet$};
    \node[point] (3) at (.5,0) {$\bullet$};
    \node[point] (4) at (1.5,0) {$\bullet$};
    \node [above] at (1) {\tiny $1$};
    \node [above] at (2) {\tiny $2$};
    \node [below] at (3) {\tiny $3$};
    \node [below] at (4) {\tiny $4$};
    \draw [green!60!black, semithick] (1) to [out=-40,in=-140] (3);
    \draw [magenta, semithick] (2) -- (3);
    \draw [blue, semithick] (2) to [out=40,in=140] (4);
  \end{tikzpicture}
  \]
  \caption{The action of the braid element \(\sigma_2 = \tau_{(2,3)}\) on the basic curves}
  \label{fig:actiononcurves}
\end{figure}
Furthermore, to each curve \(\gamma\) we can assign the braid \(\tau_{\gamma}\), which is the anticlockwise half Dehn twist along the curve.

Let \(V_q\) denote the Burau representation (see~\cref{def:burau-rep}) of the chosen Coxeter type (\(A_{n-1}\) in this case).
There is a way (which we do not detail here, but is explained, e.g. in~\cite{KhS} and in~\cite{Bigelow_Burau}) to assign an element of $V_q$ to any curve.
The basic curves $(i,i+1)$ are sent to $\alpha_i$.
For more complicated curves, the coefficient of $\alpha_i$ roughly counts how many times the curve navigates between points $i$ and $i+1$, by computing minimal intersection numbers with certain basic curves. 
Furthermore, each such intersection number is attached to a power of \(q\) based on the total winding number of the curve around the outermost disk from the starting point up to the current point.
Thus the coefficient of each \(\alpha_i\) is a certain graded intersection number.
This process is compatible with the pairing, in the following sense: it sends a graded version of the number of intersections between curves to the pairing.

Bigelow's criterion to disprove faithfulness~\cite[Theorem 1.4]{Bigelow_Burau} then expresses as follows: the representation is faithful if and only if there does not exist a pair of curves $\gamma_1$ and $\gamma_2$ that:
\begin{itemize}
\item do intersect non-trivially;
\item but have trivial pairing in $V_q$: $\langle \gamma_1,\gamma_2\rangle=0$.
\end{itemize}

Indeed if such a pair can be found, then the commutator of the corresponding half Dehn twists $[\tau_{\gamma_1},\tau_{\gamma_2}]$, is a non-trivial element in the kernel.
Reciprocally, if $\beta$ is in the kernel, then a pair of the kind $((i,i+1),\beta((i+2,i+3)))$ will meet the above criteria.

\begin{remark}\label{rem:general-strategy}
  The curve-based strategy does not immediately extend outside of type $A$, where one lacks geometric models.
  The key idea behind this paper is that the Khovanov--Seidel categorical representation~\cite{KhS} provides a natural replacement for curves, namely spherical objects in a certain category.
  This categorical representation is computable via the code in~\cite{BD_git}, and thus we are able to run computer searches for counterexamples.
\end{remark}

We will develop this idea and its consequences in Sections~\ref{sec:A3tilde} and~\ref{sec:strategyZ}. In particular, we state two versions of Bigelow's criterion for curves to generate a non-trivial element of the kernel (\cref{lem:CatBigelow1} and \cref{lem:CatBigelow2}), where the non-triviality uses information about the space of morphisms at the categorical level.

Finally, in Section~\ref{sec:bucket}, we find another condition for a curve-like object to generate a non-trivial element of the kernel (see \cref{lem:samecurve}). We mix this with Gibson, Williamson and Yacobi's bucket search~\cite{GibsonWilliamsonYacobi} to explore faithfulness in type $D_4$.

The three conditions provide variations of Bigelow's criterion, and extend it in the situations where geometric arguments are not readily available. Here are the three basic ideas we use, expressed in the classical curve context: for curves $\gamma_1$ and $\gamma_2$ and their associated half Dehn twists $\tau_{\gamma_1}$ and $\tau_{\gamma_2}$, 
\begin{itemize}
\item in~\cref{lem:CatBigelow1}: if $\langle \gamma_1,\gamma_2\rangle=0$, then $[\tau_{\gamma_1},\tau_{\gamma_2}]$ is in the kernel of the Burau representation;
\item in~\cref{lem:CatBigelow2}: if $\langle \gamma_1,\gamma_2\rangle=\pm q^{r}$, then $\tau_{\gamma_1}\tau_{\gamma_2}\tau_{\gamma_1}\tau_{\gamma_2}^{-1}\tau_{\gamma_1}^{-1}\tau_{\gamma_2}^{-1}$ is in the kernel of the Burau representation;
\item in~\cref{cor:Bigelow3}: if $\gamma_1$ and $\gamma_2$ are equal in $V_q$, then $\tau_{\gamma_1}\tau_{\gamma_2}^{-1}$ is in the kernel of the Burau representation.
\end{itemize}
Outside of type $A$, and in particular in the case of highly connected graphs, \cref{lem:CatBigelow2} might prove more useful.
For example in the case of the complete graphs, no two simple roots pair together to zero, which makes the original criterion harder to use.

\section{The affine \(A_3\) case} \label{sec:A3tilde}

Here is our first main result.

\begin{theorem}\label{thm:unfaithful-affine-a3}
  The Burau representation is not faithful in type $\tilde{A_3}$.
\end{theorem}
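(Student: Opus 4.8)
The plan is to exhibit an explicit element $\beta \in \B(\widetilde{A_3})$, $\beta \neq 1$, that lies in the kernel of the Burau representation over $\Z[q^{\pm 1}]$, and to do so by following the categorical adaptation of Bigelow's strategy sketched in~\cref{subsec:proof-strategies}. Concretely, I would work in the Khovanov--Seidel type category $\calC_\Gamma$ attached to $\Gamma = \widetilde{A_3}$ (the $4$-cycle), whose spherical objects play the role of ``curves''. The search goes as follows: start from two of the standard spherical objects $P_i$, $P_j$ (the images of basic curves), apply braid words $w$ to one of them to produce a new spherical object $X = w(P_j)$, and test whether the pair $(P_i, X)$ satisfies one of the Bigelow-type vanishing criteria --- either $\langle P_i, X\rangle = 0$ in $V_q$ (so that $[\tau_{P_i}, \tau_X]$ is in the kernel, by~\cref{lem:CatBigelow1}), or $\langle P_i, X\rangle = \pm q^r$ (so that the length-six word of~\cref{lem:CatBigelow2} is in the kernel). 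This is exactly the computer search run on NCI; the theorem amounts to reporting the braid word $w$ that the search found, together with the resulting kernel element.

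The key steps, in order, are: (1) set up the Burau matrices for the four generators $\sigma_1,\dots,\sigma_4$ of $\B(\widetilde{A_3})$ acting on $V_{q,\widetilde{A_3}} \cong \C(q)^4$ using~\cref{def:burau-rep} and the simply-laced formula in the Example (each edge has $m_{ij}=3$, non-edges $m_{ij}=2$); (2) record the candidate $\beta$ explicitly as a word in the $\sigma_i^{\pm 1}$ --- presumably a commutator $[\tau_{\gamma_1},\tau_{\gamma_2}]$ of two half-twists coming from the two spherical objects produced by the search; (3) verify by direct matrix multiplication that the product of Burau matrices for $\beta$ is the identity, which is a finite linear-algebra check over $\Z[q^{\pm 1}]$; (4) verify that $\beta \neq 1$ in $\B(\widetilde{A_3})$ --- this is the step requiring genuine care, and I would do it either by evaluating a different, known-faithful (or at least known-to-separate-$\beta$) representation or invariant of Artin--Tits groups on $\beta$, or, in keeping with the paper's philosophy, by computing in the categorical representation $\calC_\Gamma$ itself and exhibiting a nonzero morphism space that detects $\beta$ (this is precisely why~\cref{lem:CatBigelow1} and~\cref{lem:CatBigelow2} are phrased categorically: the non-triviality of the kernel element is certified by morphism-space data rather than by a geometric intersection argument that is unavailable outside type $A$).

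I expect step (4) --- certifying $\beta \neq 1$ --- to be the main obstacle, for two reasons. First, unlike the braid group, $\B(\widetilde{A_3})$ is not obviously the mapping class group of a punctured surface, so Bigelow's original geometric non-triviality argument (``the half-twists along genuinely intersecting curves do not commute'') has no immediate analogue; the paper's resolution is to replace curves by spherical objects and intersection numbers by graded dimensions of $\Hom$-complexes in $\calC_\Gamma$, and one must check that the relevant $\Hom$-space is genuinely nonzero for the specific objects at hand. Second, $\widetilde{A_3}$ is an affine type, so there is no finite-order Garside/normal-form shortcut of the spherical type; one relies on the fact (used implicitly in~\cref{rem:general-strategy}) that the categorical action of $\B(\Gamma)$ on $\calC_\Gamma$ is faithful, or at least faithful enough to detect the commutator, so that a nonzero morphism between $\tau_{\gamma_2}(Y)$ and $\tau_{\gamma_1}\tau_{\gamma_2}(Y)$ for a suitable test object $Y$ witnesses $[\tau_{\gamma_1},\tau_{\gamma_2}]\neq 1$. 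Everything else --- the matrix identity of step (3), the bookkeeping of which spherical objects arise --- is routine once the search has produced the word, and indeed is exactly what the accompanying code in~\cite{BQ_git} automates.
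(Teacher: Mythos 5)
Your proposal follows essentially the same route as the paper: the actual proof exhibits explicit words $a,b$ and shows that the commutator of the conjugated generators $a\sigma_3a^{-1}$ and $b\sigma_2b^{-1}$ lies in the kernel because the Burau pairing of $aP_3$ and $bP_2$ vanishes, while non-triviality is certified exactly as you predict, by the non-vanishing of $\HOM^{\bullet,\bullet}(aP_3,bP_2)$ via \cref{lem:CatBigelow1} (no appeal to full faithfulness of the categorical action is needed, only to the non-commutation of the two spherical twists). The only thing separating your blueprint from the paper's proof is the explicit output of the search --- the concrete braid words --- which your described procedure would produce and verify in the same way.
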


\begin{proof}
  Consider the following braids:
  \begin{align*}
    a & = \sigma_3^2\sigma_4\sigma_3\sigma_2\sigma_1\sigma_3^{-1}\sigma_4\sigma_3\sigma_2\sigma_1^{-2}\sigma_4,\\
    b & = \sigma_1^2\sigma_2^{-1}\sigma_4\sigma_1\sigma_3^{-1}\sigma_2\sigma_4^{-1}\sigma_3\sigma_1\sigma_4\sigma_1\sigma_2^{-1}\sigma_4^{-2}\sigma_3,\\
    \alpha &=a\sigma_3 a^{-1}, \\
    \beta &=b\sigma_2b^{-1}.
  \end{align*}
  Then we claim that $[\alpha,\beta]$ is a non-trivial element in the kernel of the Burau representation.

  Using Bigelow's trick, one can prove that \([\alpha,\beta]\) is indeed in the kernel by checking that the pairing between $ae_3$ and $be_2$ is zero, where $e_2$ and $e_3$ are the second and third basis vector.
  Non-triviality follows from the fact that the $\Hom$ space between the two corresponding spherical objects $aP_3$ and $bP_2$ is non-trivial.
\end{proof}

\begin{remark}
  Our original computer calculations output the following braid words for \(a\) and \(b\) respectively:
  \begin{align*}
    a' &=\sigma_3\sigma_1\sigma_2\sigma_1\sigma_{3}^{-1}\sigma_4\sigma_2\sigma_3\sigma_2\sigma_{3}^{-1}\sigma_1^{-2}\sigma_{4} \\
    b' &=\sigma_1\sigma_{4}^{-1}\sigma_1^2\sigma_3^{-2}\sigma_{2}^{-1}\sigma_4\sigma_1\sigma_{3}^{-1}\sigma_2\sigma_4^{-1}\sigma_3\sigma_{1}\sigma_{4}\sigma_{1}\sigma_2^{-1}\sigma_4^{-2}\sigma_3.
  \end{align*}
  We can transform the pair \((a',b')\) into the pair \((a,b)\) used in~\cref{thm:unfaithful-affine-a3} by braid moves, simultaneous letter cancellation, as well as by moving some letters from the front of \(b'\) to the front of \(a'\).
  Explicitly, we follow the following steps.
  \begin{enumerate}
  \item Reduce the sub-expression \(\sigma_2\sigma_3\sigma_2\sigma_3^{-1}\) appearing in \(a'\) to \(\sigma_3\sigma_2\).
  \item Swap \(\sigma_3\sigma_1\) at the start of \(a'\) to \(\sigma_1\sigma_3\), and then cancel \(\sigma_1\) from the front of both words.
  \item Swap \(\sigma_1^2\sigma_3^{-2}\) to \(\sigma_3^{-2}\sigma_1^2\) in \(b'\), and then move the expression \(\sigma_4^{-1}\sigma_3^{-2}\) appearing in the front of \(b'\) to the front of \(a\) as the inverse expression \(\sigma_3^2\sigma_4\).
  \end{enumerate}
  We make this transformation because the pair \((a,b)\) is much simpler, and produces a cleaner drawing as illustrated below.
\end{remark}

Although we used the Khovanov--Seidel categorical representation to find this counterexample, the case of affine type \(A\) does, in fact, have a geometric model (see for instance~\cite{GTW}, especially Sections 4 and 5).
The braid group of affine type \(A_n\) is a subgroup of the mapping class group of an annulus with \(n+1\) punctures.
We consider the action of this braid group on curves that begin and end at two punctures, do not self-intersect, and do not intersect any of the punctures in their interior.
Thus our objects \(aP_3\) and \(bP_2\) can be represented by such curves as well.

It is convenient to draw the curves on the universal cover of the annulus with \(n+1 = 4\) punctures.
The universal cover is then an infinite punctured strip, where each puncture lifts to \(\mathbb{Z}\)-many copies of itself.
A curve on the annulus from point \(p\) to \(q\) can be lifted uniquely to a curve starting at any lift of \(p\); it will then end at some lift of \(q\).

Label the four punctures in the annulus successively as \(1,2,3,4\).
Then \(P_1\), \(P_2\), and \(P_3\) are represented by the shortest curves joining \(1 \leftrightarrow 2\), \(2 \leftrightarrow 3\), and \(3 \leftrightarrow 4\) respectively, while \(P_4\) is the curve joining \(4 \leftrightarrow 1\) going around the hole of the annulus, as shown in~\cref{fig:curves-on-annulus}.
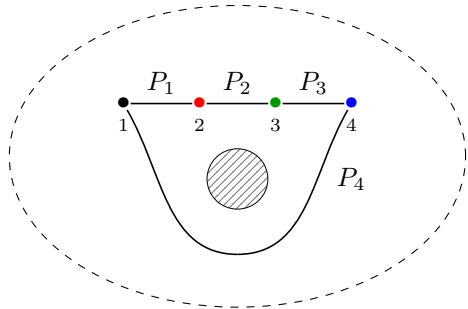
\begin{figure}[ht]
  \centering
  \begin{tikzpicture}
    \tikzstyle{pointlabel}=[font=\scriptsize]    
    \node [black, inner sep=0em] (1) at (1,0) {$\bullet$};
    \node [pointlabel, below=0em of 1] {\(1\)};
    \node [red, inner sep=0em] (2) at (2,0) {$\bullet$};
    \node [pointlabel, below=0 of 2] {\(2\)};    
    \node [green!60!black, inner sep=0em] (3) at (3,0) {$\bullet$};
    \node [pointlabel, below=0em of 3] {\(3\)};        
    \node [blue, inner sep=0em] (4) at (4,0) {$\bullet$};
    \node [pointlabel, below=0em of 4] {\(4\)};
    \draw[very thin, pattern=north east lines, pattern color=gray] (2.5, -1) circle (0.4);
    \path[semithick]
    (1) edge node[above] {\(P_1\)} (2)
    (2) edge node[above] {\(P_2\)} (3)
    (3) edge node[above] {\(P_3\)} (4)
    (4) edge[out=-120, in=0] (2.5, -2)
    (2.5,-2) edge[out=180, in=-60] (1)
    ;
\node at (4,-1) {\(P_4\)};
    \draw[dashed] (2.5,-0.7) ellipse (3cm and 2cm);
  \end{tikzpicture}
  \caption[Basic curves on the annulus]{The basic curves \(P_1\), \(P_2\), \(P_3\), and \(P_4\) as drawn on the annulus.}
  \label{fig:curves-on-annulus}
\end{figure}

In the universal cover, the curve \(P_i\) is then represented by the straight line joining any lift of the point \(i\) to the closest lift of the point \(i+1\) (modulo \(4\)).
The Dehn twist in a curve on the annulus is performed in the universal cover simultaneously around every lift of the curve.
With this correspondence, our curves \(aP_3\) and \(b P_2\) are drawn on the universal cover as shown in~\cref{fig:ap3,fig:bp2}.
It is evident from the pictures that the curves intersect: this is easiest to verify close to the left hand endpoint of the top curve, and also the right hand endpoint of the bottom curve.

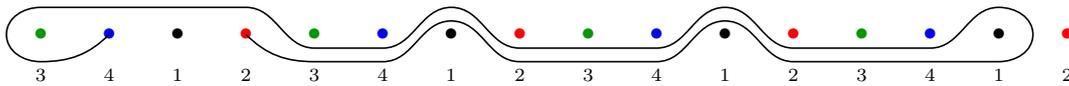
\begin{figure}[h]
  \centering
  \begin{tikzpicture}[anchorbase,scale=.9]
    \tikzstyle{pointlabel}=[font=\scriptsize]
    \tikzstyle{point}=[inner sep=0em]
    \node [point, green!60!black] (1) at (1,0) {$\bullet$};
    \node [pointlabel, below=0.7em of 1] {\(3\)};                        
    \node [point, blue] (2) at (2,0) {$\bullet$};
    \node [pointlabel, below=0.7em of 2] {\(4\)};                            
    \node [point, black] (3) at (3,0) {$\bullet$};
    \node [pointlabel, below=0.7em of 3] {\(1\)};                        
    \node [point, red] (4) at (4,0) {$\bullet$};
    \node [pointlabel, below=0.7em of 4] {\(2\)};                        
    \node [point, green!60!black] (5) at (5,0) {$\bullet$};
    \node [pointlabel, below=0.7em of 5] {\(3\)};                        
    \node [point, blue] (6) at (6,0) {$\bullet$};
    \node [pointlabel, below=0.7em of 6] {\(4\)};                        
    \node [point, black] (7) at (7,0) {$\bullet$};
    \node [pointlabel, below=0.7em of 7] {\(1\)};                        
    \node [point, red] (8) at (8,0) {$\bullet$};
    \node [pointlabel, below=0.7em of 8] {\(2\)};                        
    \node [point, green!60!black] (9) at (9,0) {$\bullet$};
    \node [pointlabel, below=0.7em of 9] {\(3\)};                        
    \node [point, blue] (10) at (10,0) {$\bullet$};
    \node [pointlabel, below=0.7em of 10] {\(4\)};                        
    \node [point, black] (11) at (11,0) {$\bullet$};
    \node [pointlabel, below=0.7em of 11] {\(1\)};                        
    \node [point, red] (12) at (12,0) {$\bullet$};
    \node [pointlabel, below=0.7em of 12] {\(2\)};                        
    \node [point, green!60!black] (13) at (13,0) {$\bullet$};
    \node [pointlabel, below=0.7em of 13] {\(3\)};                        
    \node [point, blue] (14) at (14,0) {$\bullet$};
    \node [pointlabel, below=0.7em of 14] {\(4\)};                        
    \node [point, black] (15) at (15,0) {$\bullet$};
    \node [pointlabel, below=0.7em of 15] {\(1\)};                        
    \node [point, red] (16) at (16,0) {$\bullet$};
    \node [pointlabel, below=0.7em of 16] {\(2\)};
    \draw[semithick] (2.center) to [out=225, in=0] (1,-0.4)
    to [out=180, in=-90] (0.5,0)
    to [out=90, in=180] (1,0.4)
    to (4, 0.4)
    to [out=0, in=180] (5, -0.2)
    to (6, -0.2)
    to [out=0, in=180] (7, 0.4)
    to [out=0, in=180] (8, -0.2)
    to (10, -0.2)
    to [out=0, in=180] (11, 0.4)
    to [out=0, in=180] (12, -0.2)
    to (14, -0.2)
    to [out=0, in=180] (15, 0.4)
    to [out=0, in=90] (15.5, 0)
    to [out=-90, in=0] (15, -0.4)
    to (12, -0.4)
    to [out=180, in=0] (11, 0.2)
    to [out=180, in=0] (10, -0.4)
    to (8, -0.4)
    to [out=180, in=0] (7, 0.2)
    to [out=180, in=0] (6, -0.4)
    to (5, -0.4)
    to [out=180, in=-45] (4.center)
    ;
  \end{tikzpicture}
  \caption[The curve \(aP_3\).]{The curve \(aP_3\) drawn in the universal cover of the annulus with four marked points.}\label{fig:ap3}
\end{figure}

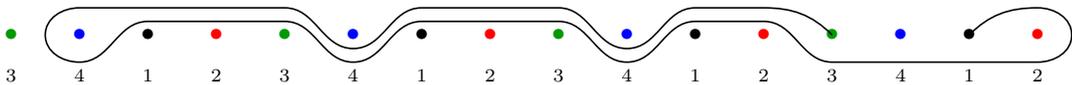
\begin{figure}[h]
  \centering
  \begin{tikzpicture}[anchorbase,scale=.9, trim left=5pt]
    \tikzstyle{pointlabel}=[font=\scriptsize]
    \tikzstyle{point}=[inner sep=0em]
    \node [point, green!60!black] (1) at (1,0) {$\bullet$};
    \node [pointlabel, below=0.7em of 1] {\(3\)};                        
    \node [point, blue] (2) at (2,0) {$\bullet$};
    \node [pointlabel, below=0.7em of 2] {\(4\)};                            
    \node [point, black] (3) at (3,0) {$\bullet$};
    \node [pointlabel, below=0.7em of 3] {\(1\)};                        
    \node [point, red] (4) at (4,0) {$\bullet$};
    \node [pointlabel, below=0.7em of 4] {\(2\)};                        
    \node [point, green!60!black] (5) at (5,0) {$\bullet$};
    \node [pointlabel, below=0.7em of 5] {\(3\)};                        
    \node [point, blue] (6) at (6,0) {$\bullet$};
    \node [pointlabel, below=0.7em of 6] {\(4\)};                        
    \node [point, black] (7) at (7,0) {$\bullet$};
    \node [pointlabel, below=0.7em of 7] {\(1\)};                        
    \node [point, red] (8) at (8,0) {$\bullet$};
    \node [pointlabel, below=0.7em of 8] {\(2\)};                        
    \node [point, green!60!black] (9) at (9,0) {$\bullet$};
    \node [pointlabel, below=0.7em of 9] {\(3\)};                        
    \node [point, blue] (10) at (10,0) {$\bullet$};
    \node [pointlabel, below=0.7em of 10] {\(4\)};                        
    \node [point, black] (11) at (11,0) {$\bullet$};
    \node [pointlabel, below=0.7em of 11] {\(1\)};                        
    \node [point, red] (12) at (12,0) {$\bullet$};
    \node [pointlabel, below=0.7em of 12] {\(2\)};                        
    \node [point, green!60!black] (13) at (13,0) {$\bullet$};
    \node [pointlabel, below=0.7em of 13] {\(3\)};                        
    \node [point, blue] (14) at (14,0) {$\bullet$};
    \node [pointlabel, below=0.7em of 14] {\(4\)};                        
    \node [point, black] (15) at (15,0) {$\bullet$};
    \node [pointlabel, below=0.7em of 15] {\(1\)};                        
    \node [point, red] (16) at (16,0) {$\bullet$};
    \node [pointlabel, below=0.7em of 16] {\(2\)};

    \draw[semithick] (15.center) to [out=45, in=180] (16, 0.4);
    \draw[semithick] (16, 0.4) to [out=0, in=90] (16.5, 0)
    to [out=-90, in=0] (16,-0.4)
    to (13,-0.4)
    to [out=180, in=0] (12, 0.2)
    to (11, 0.2)
    to [out=180, in=0] (10, -0.4)
    to [out=180, in=0] (9,  0.2)
    to (7,  0.2)
    to [out=180, in=0] (6, -0.4)
    to [out=180, in=0] (5,  0.2)
    to (3,  0.2)
    to [out=180, in=0] (2, -0.4)
    to [out=180, in=-90] (1.5, 0)
    to [out=90, in=180] (2, 0.4)
    to (5, 0.4)
    to [out=0, in=180] (6, -0.2)
    to [out=0, in=180] (7, 0.4)
    to (9, 0.4)
    to [out=0, in=180] (10, -0.2)
    to [out=0, in=180] (11, 0.4)
    to (12, 0.4)
    to [out=0, in=135] (13.center)
    ;
  \end{tikzpicture}
  \caption[The curve \(bP_2\).]{The curve \(bP_2\) drawn in the universal cover of the annulus with four marked points.}\label{fig:bp2}
\end{figure}

\begin{remark}
  It is a classical result that Artin--Tits groups in type $\tilde{A_n}$ embed in braid groups of type $A_{n+1}$: the former can be represented by braids in an annulus, the core of which can be made into a fixed strand of a classical braid (see for example~\cite{Allcock}).
  First, this embedding suggests a curve-based approach to the above result.
  We did not use such an approach computationally, since we wanted to investigate more cases than those accessible by curve-based methods.
  Second, this embedding raises the question of whether the lack of faithfulness claimed above is a consequence of the lack of faithfulness in type $A_4$. There do not seem to be direct arguments in favor of this however, as the Burau pairing in affine type $A$ is not the one inherited by the inclusion of groups.
\end{remark}

\section{Strategy and computations over $\Z$} \label{sec:strategyZ}

In this section we expand on~\cref{rem:general-strategy} to explain the design of the search that led to \cref{thm:unfaithful-affine-a3}.
Our strategy is based on the one used by Bigelow, with curves replaced by spherical objects in Khovanov--Seidel's category.
It should be noted that Khovanov--Seidel's representation can be easily defined in any simply-laced type, but that no faithfulness result is known outside of types $A$, $D$, $E$ and $\tilde{A}$.

We recall some important features of the Khovanov--Seidel categorical representation, referring the reader to~\cite{KhS} or the survey~\cite{Queffelec_KhS} for further details.
Fix a Coxeter diagram \(\Gamma\).
Let \(A_{\Gamma}\) be the zigzag algebra of \(\Gamma\) (see, e.g.,~\cite[Section 5]{hue.kho:01} as well as~\cite[Remark 6.6]{BDL}).
Let \(\mathcal{C}_{\Gamma}\) be the bounded homotopy category of finitely-generated graded projective modules over \(A_{\Gamma}\).
We set \(\{P_i \mid i \in V(\Gamma)\}\) to be the projective indecomposable \(A_{\Gamma}\)-modules.
Following~\cite{KhS}, we can define an action of the Artin--Tits group \(\mathbf{B}(\Gamma)\) on \(\mathcal{C}_{\Gamma}\).
The resulting action of \(\mathbf{B}(\Gamma)\) on the Grothendieck group \(K_0(\mathcal{C}_{\Gamma})\) recovers the Burau representation \(V_q = V_{q, \Gamma}\).

The morphisms in \(\mathcal{C}_{\Gamma}\) are bigraded by internal graded degree and homological degree.
We write \(\HOM^{g,h}(X,Y)\) to mean the morphisms from \(X\) to \(Y\) of graded degree \(g\) and homological degree \(h\).
That is,
\[\HOM^{g,h}(X,Y) = \HOM(X,Y\{g\}[h]),\]
where the functors \(\{g\}, [h] \colon \mathcal{C}_{\Gamma} \to \mathcal{C}_{\Gamma}\) denote the grading shift and homological shift by \(g\) and \(h\) respectively.
We write \(\HOM^{\bullet,\bullet}(X,Y)\) to mean the total bigraded hom space.
That is,
\[\HOM^{\bullet,\bullet}(X,Y) = \bigoplus_{g,h \in \mathbb{Z}}\HOM^{g,h}(X,Y).\]
The spaces of bigraded morphisms enjoy the following duality for any \(g, h \in \mathbb{Z}\) and objects \(X, Y\) of \(\mathcal{C}_{\Gamma}\):
\[\HOM^{g,h}(X,Y) \cong \HOM^{2-g, -h}(Y,X)^{\vee}.\]
For \(X,Y \in \mathcal{C}_{\Gamma}\), the pairing on $K_0(\mathcal{C}_\Gamma) = V_q$ between \([X]\) and \([Y]\) is simply the \((q,-1)\) Euler characteristic of the total hom space.
That is:
\[\langle [X], [Y] \rangle = \sum_{g \in \mathbb{Z}}q^g \left(\sum_{h \in \mathbb{Z}}(-1)^h \dim \HOM^{g,h}(X,Y)\right).\]
Note that by the duality between the hom-spaces, the pairing \(\langle [X],[Y] \rangle\) is related to \(\langle [Y], [X] \rangle\).
More precisely, suppose that \(\langle [X],[Y] \rangle = p(q)\) for some polynomial expression in \(\mathbb{Z}[q,q^{-1}]\).
Then 
\(\langle [Y], [X] \rangle = q^2 p(q^{-1})\).

A special role is played by the \emph{(2-)spherical objects} in \(\mathcal{C}_{\Gamma}\): these are the objects \(X\) such that:
\begin{itemize}
\item \(\END^{\bullet,h}(X) = 0\) for \(h \neq 0\); and
\item the algebra \(\END^{\bullet, 0}(X)\) is isomorphic to the cohomology algebra of the \(2\)-sphere.
\end{itemize}
Each spherical object \(X \in \mathcal{C}_{\Gamma}\) gives rise to an autoequivalence \(\tau_X \colon \mathcal{C}_{\Gamma} \xrightarrow{\sim} \mathcal{C}_{\Gamma}\), called the \emph{spherical twist} with respect to \(X\).
In particular, the generating projective modules \(\{P_i \mid i \in V(\Gamma)\}\) are always spherical, and the associated twists \(\tau_{P_i}\) satisfy the definining relations of the group \(\mathbf{B}(\Gamma)\) up to equivalence.

In the situations when we have geometric models such as in type \(A\), the spherical objects (up to shift) are the ones that can be recovered from curves, as explained in~\cref{subsec:proof-strategies}.
Recall also that the pairing between two spherical objects is realised by the graded intersection number between the corresponding curves.
Moreover, the spherical twist in a spherical object corresponds to a half Dehn twist in the corresponding curve.
For these reasons, in the general situation, we regard the spherical objects of \(\mathcal{C}_{\Gamma}\) as analogues of curves.
We now explain how we adapt Bigelow's strategy to these more general situations.

We now describe two versions of Bigelow's criterion, each of which is useful depending on the situation.
The first criterion (\cref{lem:CatBigelow1}) is very similar to the one used by Bigelow~\cite[Theorem 1.4]{Bigelow_Burau}, while the second one (\cref{lem:CatBigelow2}) is new.
Furthermore, the proof of the first one is similar to, but easier than the proof of the second one. 
We thus only prove the second criterion here, and leave the proof of the first criterion to the reader.
We emphasize again that we are replacing the geometric condition in Bigelow's criterion by a homological one, in which curves are replaced by spherical objects.

\begin{lemma}\label{lem:CatBigelow1}
  Assume we have two spherical objects $X_1$ and $X_2$ in \(\mathcal{C}_{\Gamma}\), so that:
  \begin{itemize}
  \item $\HOM^{\bullet,\bullet}(X_1,X_2)\neq \{0\}$; and 
  \item $\langle [X_1],[X_2]\rangle =0$ in $V_q$.
  \end{itemize}
  Then the commutator $[\tau_{X_1},\tau_{X_2}] = \tau_{X_1} \tau_{X_2} \tau_{X_1}^{-1} \tau_{X_2}^{-1}$ of their associated spherical twists is a non-trivial element of the kernel of the Burau representation.
\end{lemma}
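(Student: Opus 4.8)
The plan is to split the statement into two independent claims: that $[\tau_{X_1},\tau_{X_2}]$ lies in the kernel, and that it is non-trivial. For the first claim, I would work entirely on the Grothendieck group $K_0(\mathcal{C}_\Gamma) = V_q$. The key algebraic input is the formula for the action of a spherical twist on classes: for a spherical object $X$ one has in $K_0$ the identity $[\tau_X(Y)] = [Y] - \langle [X],[Y]\rangle\,[X]$ (the decategorification of the spherical twist triangle), where the coefficient is exactly the $(q,-1)$-Euler characteristic of $\HOM^{\bullet,\bullet}(X,Y)$ described in the excerpt. I would first record this formula, then compute that when $\langle [X_1],[X_2]\rangle = 0$ the operators $\tau_{X_1}$ and $\tau_{X_2}$ on $V_q$ both fix $[X_1]$ and $[X_2]$ appropriately and in fact commute: a direct expansion shows $\tau_{X_1}\tau_{X_2}[Y] = [Y] - \langle[X_2],[Y]\rangle[X_2] - \langle[X_1],[Y]\rangle[X_1] + \langle[X_1],[X_2]\rangle\langle[X_2],[Y]\rangle[X_1]$, and the symmetric computation for $\tau_{X_2}\tau_{X_1}$ differs only by the term with $\langle[X_2],[X_1]\rangle$; since $\langle[X_1],[X_2]\rangle=0$ forces $\langle[X_2],[X_1]\rangle = q^2 p(q^{-1})|_{p=0} = 0$ as well (using the duality relation stated in the excerpt), both cross-terms vanish and the two operators commute on all of $V_q$. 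Hence $[\tau_{X_1},\tau_{X_2}]$ acts as the identity on $V_q$, i.e.\ it is in the kernel of the Burau representation.

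For the non-triviality claim, the strategy is to show that $[\tau_{X_1},\tau_{X_2}]$ acts non-trivially on the category $\mathcal{C}_\Gamma$ itself (whereas it acts trivially on $K_0$), so in particular it is not the identity element of $\mathbf{B}(\Gamma)$. The cleanest route is to track the image of one of the spherical objects, say $X_1$, under the composite. Since $\tau_{X_1}(X_1) \cong X_1[-2]$ (the twist of a spherical object about itself is just a shift, a standard fact about spherical twists), one reduces to understanding $\tau_{X_2}\tau_{X_1}^{-1}\tau_{X_2}^{-1}(X_1)$ versus $X_1$; applying $\tau_{X_2}^{-1}$ and using the defining exact triangle for the spherical twist, the object $\tau_{X_2}^{-1}(X_1)$ fits in a triangle involving $\HOM^{\bullet,\bullet}(X_1,X_2)^\vee \otimes X_2$ (or its dual), which is a \emph{non-zero} contribution precisely because of the hypothesis $\HOM^{\bullet,\bullet}(X_1,X_2)\neq 0$. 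I would then argue that this non-zero $X_2$-summand genuinely changes the object — for instance by comparing $\HOM^{\bullet,\bullet}(X_1, -)$ or the "curve-like" invariants before and after — so that $[\tau_{X_1},\tau_{X_2}](X_1) \not\cong X_1$, which is impossible if $[\tau_{X_1},\tau_{X_2}]$ were trivial in $\mathbf{B}(\Gamma)$. An alternative, possibly more robust, formulation is to invoke faithfulness of the categorical action on $\mathcal{C}_\Gamma$ in the relevant types together with the fact that $\tau_{X_1},\tau_{X_2}$ do not commute \emph{as autoequivalences} whenever $\HOM^{\bullet,\bullet}(X_1,X_2)\neq 0$, even though they commute on $K_0$.

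The main obstacle is the non-triviality argument: proving that an element is in the kernel is a routine Grothendieck-group computation, but showing the commutator is not the identity in $\mathbf{B}(\Gamma)$ requires genuinely categorical (rather than decategorified) reasoning, and one must be careful about shifts — the hypothesis $\HOM^{\bullet,\bullet}(X_1,X_2)\neq 0$ guarantees a non-zero morphism space but one must ensure the resulting cone does not accidentally simplify back to $X_1$ up to shift, and that no cancellation occurs when the full commutator $\tau_{X_1}\tau_{X_2}\tau_{X_1}^{-1}\tau_{X_2}^{-1}$ is assembled. In the geometric (type $A$) picture this is Bigelow's original argument that the two half Dehn twists fail to commute because their supporting curves intersect; the categorical version must reproduce this "the cone remembers the intersection" phenomenon homologically. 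Since the excerpt says the proof of this lemma is "similar to, but easier than" that of \cref{lem:CatBigelow2}, I would expect to extract the non-triviality step as a special case of whatever mechanism handles the $\langle[X_1],[X_2]\rangle = \pm q^r$ situation there, specialized to the vanishing-pairing case.
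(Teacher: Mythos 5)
Your first half (membership in the kernel) is correct and is essentially all the paper says about that step (``clear from the pairing''): the decategorified twist formula together with the duality \(\langle [Y],[X]\rangle = q^2p(q^{-1})\) kills both cross-terms, so the two reflections commute on \(V_q\). The gap is in the non-triviality half, which is where the actual content of the lemma lies. Your primary route --- tracking \(X_1\) through the composite and arguing that the non-zero \(\HOM\)-space ``genuinely changes the object'' --- states exactly what has to be proved without supplying the invariant that detects the change: on \(K_0\) nothing changes by construction, so some genuinely categorical bookkeeping is unavoidable, and ``compare \(\HOM^{\bullet,\bullet}(X_1,-)\) before and after'' is left as a hope rather than a computation. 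Your fallback of invoking faithfulness of the categorical action is both unavailable in the intended generality (the paper notes that no faithfulness result is known outside types \(A\), \(D\), \(E\), \(\tilde A\), and the lemma is meant to be applied beyond those) and unnecessary: to conclude that the braid is non-trivial it suffices that its image in \(\Aut(\mathcal{C}_{\Gamma})\) is non-trivial, and ``the twists do not commute as autoequivalences whenever \(\HOM^{\bullet,\bullet}(X_1,X_2)\neq 0\)'' is precisely the unproved assertion.

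The mechanism the paper intends (it proves only \cref{lem:CatBigelow2} and leaves this lemma as the easier special case) works at the level of bimodule complexes rather than by tracking a single object. Write \(\tau_{X_i}\) as the complex \(X_i\otimes X_i^{\vee}\to A_{\Gamma}\) and tensor over \(A_{\Gamma}\): the composite \(\tau_{X_1}\tau_{X_2}\) has leftmost term \(X_1\otimes(X_1^{\vee}\otimes_{A_{\Gamma}}X_2)\otimes X_2^{\vee}\), and \(X_1^{\vee}\otimes_{A_{\Gamma}}X_2\cong \HOM^{\bullet,\bullet}(X_1,X_2)=:W_1\neq 0\). Because the pairing vanishes, \(W_1\) contains no distinguished \(\C\langle 1\rangle\) summand to Gaussian-eliminate against the middle terms (this is exactly what makes this case easier than \cref{lem:CatBigelow2}), so this term survives. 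The leftmost term of \(\tau_{X_2}\tau_{X_1}\) is \(X_2\otimes W_2\otimes X_1^{\vee}\) with \(W_2=\HOM^{\bullet,\bullet}(X_2,X_1)\neq 0\) by the duality, while the remaining terms of the two complexes agree. Since \(X_1\not\cong X_2\) up to shift (otherwise \(\langle[X_1],[X_2]\rangle\) would equal \(\pm q^{r}(1+q^2)\neq 0\) by sphericality), these leftmost bimodules are non-isomorphic, so \(\tau_{X_1}\tau_{X_2}\neq\tau_{X_2}\tau_{X_1}\) as autoequivalences and the commutator is a non-trivial braid. This comparison of the two composites as complexes is the concrete step your sketch is missing; your closing suggestion to specialise the proof of \cref{lem:CatBigelow2} is the right instinct, but it needs to be carried out.
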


\begin{lemma}\label{lem:CatBigelow2}
  Assume we have two spherical objects $X_1$ and $X_2$, so that:
  \begin{itemize}
  \item $\dim \HOM^{\bullet,\bullet}(X_1,X_2) > 1$; and 
  \item $\langle [X_1],[X_2]\rangle = q$ in $V_q$.
  \end{itemize}
  Then $\tau_{X_1}\tau_{X_2}\tau_{X_1}\tau_{X_2}^{-1}\tau_{X_1}^{-1}\tau_{X_2}^{-1}$ is a non-trivial element of the kernel of the Burau representation.
\end{lemma}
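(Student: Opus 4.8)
The plan is to split the statement into two parts: first, that the element $w = \tau_{X_1}\tau_{X_2}\tau_{X_1}\tau_{X_2}^{-1}\tau_{X_1}^{-1}\tau_{X_2}^{-1}$ acts trivially on $V_q$, and second, that $w$ is not the identity in $\mathbf{B}(\Gamma)$. For the first part, the key observation is that $w$ is a commutator in disguise: since $\tau_{X_1}$ and $\tau_{X_2}$ are spherical twists and their behaviour on the Grothendieck group is by the reflections $s_{[X_1]}$ and $s_{[X_2]}$ (the Burau action), it suffices to check that $s_{[X_1]} s_{[X_2]} s_{[X_1]} s_{[X_2]}^{-1} s_{[X_1]}^{-1} s_{[X_2]}^{-1} = \id$ on $V_q$. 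The relevant linear-algebra fact is that two reflections $s_x, s_y$ in a bilinear form, with $\langle x, y\rangle$ equal to a monomial $q^r$ (here $r=1$), generate an action in which the braid-like relation $s_x s_y s_x = s_y s_x s_y$ holds — this is exactly the $m_{ij}=3$ situation in the geometric/Burau picture, where the pairing value $\langle \alpha_i, \alpha_j \rangle = q$ corresponds to an edge. So I would compute directly: on the subspace spanned by $[X_1], [X_2]$, check that $s_{[X_1]} s_{[X_2]} s_{[X_1]} = s_{[X_2]} s_{[X_1]} s_{[X_2]}$ using $\langle [X_1],[X_2]\rangle = q$ and the sesquilinearity/duality relation $\langle [X_2],[X_1]\rangle = q^2 \cdot q^{-1} = q$; and check that both $s_{[X_1]}$ and $s_{[X_2]}$ fix the orthogonal complement of $\mathrm{span}([X_1],[X_2])$ pointwise, so the braid relation propagates to all of $V_q$. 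Rearranging $s_{[X_1]} s_{[X_2]} s_{[X_1]} = s_{[X_2]} s_{[X_1]} s_{[X_2]}$ gives $s_{[X_1]} s_{[X_2]} s_{[X_1]} s_{[X_2]}^{-1} s_{[X_1]}^{-1} s_{[X_2]}^{-1} = \id$, which is precisely the image of $w$. Hence $w \in \ker(V_q)$.

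For non-triviality, the strategy mirrors Bigelow's argument via the categorified representation. Suppose for contradiction that $w = \id$ in $\mathbf{B}(\Gamma)$; then the autoequivalence $\tau_{X_1}\tau_{X_2}\tau_{X_1}\tau_{X_2}^{-1}\tau_{X_1}^{-1}\tau_{X_2}^{-1}$ of $\mathcal{C}_\Gamma$ is (isomorphic to) the identity functor, up to the grading and homological shifts that the faithfulness-up-to-shift statements allow. Apply this functor to $X_2$. Using the defining triangle for a spherical twist, $\tau_{X_i}(Y)$ fits into $\HOM^{\bullet,\bullet}(X_i, Y)\otimes X_i \to Y \to \tau_{X_i}(Y)$, I would track $\tau_{X_1}\tau_{X_2}\tau_{X_1}\tau_{X_2}^{-1}\tau_{X_1}^{-1}\tau_{X_2}^{-1}(X_2)$ through these cones. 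The hypothesis $\langle [X_1],[X_2]\rangle = q$ pins down the graded Euler characteristic of $\HOM^{\bullet,\bullet}(X_1,X_2)$ to be a single monomial, while $\dim \HOM^{\bullet,\bullet}(X_1, X_2) > 1$ forces there to be extra morphisms beyond what that Euler characteristic predicts — i.e., cancellation between classes in different homological degrees. This extra "hidden" hom-space is what obstructs $w$ from being trivial: if $w$ were the identity, the iterated cone computation would collapse to $X_2$ itself, but the surplus morphisms produce a genuinely larger object (detectable, e.g., by computing $\dim\HOM^{\bullet,\bullet}(X_2, w(X_2))$ and seeing it exceed $\dim\HOM^{\bullet,\bullet}(X_2, X_2) = 2$, the dimension for a spherical object).

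Concretely, I would organize the non-triviality half as follows. First, record that for a spherical object $X_2$, $\HOM^{\bullet,\bullet}(X_2, X_2)$ is $2$-dimensional (degrees $(0,0)$ and $(2,0)$). Second, compute $\tau_{X_2}^{-1}(X_2)$, then apply $\tau_{X_1}^{-1}$, and so on, at each stage using that $[X_i]$ and the Euler-characteristic data determine the class in $K_0$ but the actual hom-dimensions may be strictly larger. Third, compare the total hom-space $\HOM^{\bullet,\bullet}(X_2, w(X_2))$ against the value it would have if $w \simeq \id$ (up to shift): the discrepancy is governed precisely by the excess $\dim \HOM^{\bullet,\bullet}(X_1,X_2) - |{\langle [X_1],[X_2]\rangle}|_{q=1} = \dim \HOM^{\bullet,\bullet}(X_1,X_2) - 1 > 0$. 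The main obstacle I anticipate is the bookkeeping in this second half: controlling the homological and internal degrees through a sixfold composition of twists and inverse twists, and making precise the claim that the "excess" morphisms cannot miraculously cancel again upon iterating. In the geometric (type $A$/$\widetilde{A}$) setting this is transparent because it becomes a statement about geometric intersection numbers of curves versus algebraic intersection numbers, but formulating it purely homologically — which is the whole point of the lemma — requires care with the long exact sequences coming from the spherical-twist triangles and with the duality isomorphism $\HOM^{g,h}(X,Y) \cong \HOM^{2-g,-h}(Y,X)^{\vee}$. I would lean on the analogy with the proof of Bigelow's Theorem 1.4 and with the (deferred) proof of Lemma~\ref{lem:CatBigelow1}, of which this is the "$m=3$ edge" analogue rather than the "$m=2$ non-edge" one.
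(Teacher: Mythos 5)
Your first half (membership in the kernel) is fine and is essentially what the paper does: the pairing value $\langle [X_1],[X_2]\rangle = q$, together with $\langle [X_2],[X_1]\rangle = q^2\cdot q^{-1}=q$ coming from the duality, forces the two reflections to satisfy the braid relation on $V_q$, so the word acts trivially there. The genuine gap is in the non-triviality half, which is the entire content of the lemma. You correctly identify the source of non-triviality --- the hypothesis $\dim \HOM^{\bullet,\bullet}(X_1,X_2)>1$ together with Euler characteristic $q$ forces a non-zero ``excess'' space of morphisms of trivial bigraded dimension --- but your proposed route (apply $w$ to $X_2$, track it through six spherical-twist triangles, and compare $\dim\HOM^{\bullet,\bullet}(X_2,w(X_2))$ with $2$) is never carried out, and you explicitly concede that you cannot rule out the excess classes cancelling along the way. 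That is not a bookkeeping nuisance to be deferred; it is precisely the claim that needs proof, so as written nothing has been established.

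The paper closes this gap by working at the level of functors (complexes of $A_\Gamma$-bimodules) rather than with the image of one object. Writing $\tau_X$ as the complex $X\otimes X^\vee\to A_\Gamma$, it expands $\tau_{X_1}\tau_{X_2}\tau_{X_1}$ as an explicit cube-shaped complex; the hypotheses give $X_1^\vee\otimes_{A_\Gamma}X_2\cong \mathbb{C}\langle 1\rangle\oplus W_1$ and $X_2^\vee\otimes_{A_\Gamma}X_1\cong \mathbb{C}\langle 1\rangle\oplus W_2$ with $W_1,W_2$ non-zero but of trivial bigraded dimension. Gaussian elimination then cancels the resulting $\mathbb{C}$ and $\mathbb{C}\langle 2\rangle$ summands against the term $X_1\otimes(X_1^\vee\otimes_{A_\Gamma}X_1)\otimes X_1^\vee$, leaving a reduced complex whose leftmost term is $X_1\otimes U\otimes X_1^\vee$ with $U=W_1\langle 1\rangle\oplus W_2\langle 1\rangle\oplus(W_1\otimes W_2)\neq 0$, all other terms being symmetric under $1\leftrightarrow 2$. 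The same reduction for $\tau_{X_2}\tau_{X_1}\tau_{X_2}$ produces leftmost term $X_2\otimes U\otimes X_2^\vee$, so the two functors differ, hence $\tau_{X_1}\tau_{X_2}\tau_{X_1}\neq\tau_{X_2}\tau_{X_1}\tau_{X_2}$ and the word is a non-trivial braid. If you want to rescue your object-level approach you would essentially have to redo this computation after evaluating on $X_2$, which is strictly harder; the functor-level Gaussian elimination is where the ``excess cannot cancel'' assertion actually gets proved.
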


Before we prove this statement, recall from~\cite[Section 2d]{KhS} that applying the spherical twist in an object \(X\) is the functor of tensoring with a certain complex of bimodules over the zigzag algebra \(A_{\Gamma}\), namely
  \[\tau_X \leftrightarrow (X \otimes X^{\vee} \to A_{\Gamma}),\]
  where the map from \(X \otimes X^{\vee} \to A_{\Gamma}\) is simply multiplication.
  Henceforth, let us identify \(\tau_X\) with the complex \(X \otimes X^{\vee} \to A_{\Gamma}\), so that composition of twists turns into tensoring these complexes together over \(A_{\Gamma}\).
  Furthermore, by standard results we have a natural isomorphism of bigraded vector spaces
  \[X^{\vee} \otimes_{A_{\Gamma}} Y \cong \HOM^{\bullet,\bullet}(X,Y).\]
\begin{proof}[Proof of~\cref{lem:CatBigelow2}]
  The fact that the expression \(\tau_{X_1}\tau_{X_2}\tau_{X_1}\tau_{X_2}^{-1}\tau_{X_1}^{-1}\tau_{X_2}^{-1}\) lies in the kernel of the Burau representation is clear from the pairing.
  It remains to prove that it is non-trivial, which amounts to proving that:
  \[
    \tau_{X_1}\tau_{X_2}\tau_{X_1}\neq \tau_{X_2}\tau_{X_1}\tau_{X_2}.
  \]

  The spherical twists $\tau_{X_1}$ and $\tau_{X_2}$ can be expressed as follows:
  \begin{align*}
    \tau_{X_1} &= X_1\otimes X_1^\vee \rightarrow A_\Gamma \\
    \tau_{X_2} &= X_2\otimes X_2^\vee \rightarrow A_{\Gamma}.
  \end{align*}
  The composition $\tau_{X_1} \tau_{X_2} \tau_{X_1} = \tau_{X_1}\otimes_{A_{\Gamma}} \tau_{X_2}\otimes_{A_{\Gamma}} \tau_{X_1}$ is then the following complex of bimodules.
  \begin{center}
    \begin{tikzcd}[row sep=2em, column sep=1.5em]
      &X_1\otimes (X_1^\vee \otimes_{A_\Gamma} X_1)\otimes X_1^\vee\arrow{r} \arrow{dr}
      & X_1\otimes X_1^\vee\arrow{dr} \\
      X_1\otimes (X_1^\vee \otimes_{A_{\Gamma}} X_2)\otimes (X_2^\vee \otimes_{A_{\Gamma}} X_1)\otimes X_1^\vee \arrow{ur} \arrow{dr} \arrow{r}
      & X_2\otimes (X_2^\vee \otimes_{A_\Gamma} X_1)\otimes X_1^\vee \arrow{ur} \arrow{dr}
      & X_1\otimes X_1^\vee \arrow{r}
      & A_\Gamma\\
      & X_1\otimes (X_1^\vee\otimes_{A_\Gamma} X_2) \otimes X_2^\vee \arrow{ur} \arrow{r}
      & X_2\otimes X_2^\vee \arrow{ur}
    \end{tikzcd}
  \end{center}

  Recall that $X_1^\vee \otimes X_1\simeq \C\oplus \C\langle 2\rangle$ because \(X\) is spherical.
  By the assumption on the morphisms from \(X_1\) to \(X_2\) as well as the pairing \(\langle [X_1],[X_2] \rangle\), we have
  \[X_1^{\vee} \otimes_{A_{\Gamma}}X_2 \cong \mathbb{C}\langle 1 \rangle \oplus W_1\]
  for some non-trivial bigraded vector space $W_1$ that has trivial bigraded dimension (i.e. \((q,-1)\)-dimension as in the definition of the pairing).

  Observe by the duality that \(\dim\HOM^{\bullet,\bullet}(X,Y) > 1\) if and only if \(\dim\HOM^{\bullet,\bullet}(Y,X) > 1\).
  Also, \(\langle [X_1], [X_2] \rangle = q\) implies that \(\langle [X_2], [X_1] \rangle = q\) as well.
  So we also have that
  \[ X_2^\vee\otimes_{A_{\Gamma}} X_1\simeq \C\langle 1\rangle \oplus W_2\]
  for some non-trivial bigraded vector space $W_2$ that has trivial bigraded dimension.

  The leftmost position now becomes
  \begin{align*}
    X_1 \otimes (\mathbb{C}\langle1 \rangle \oplus W_1) \otimes (\mathbb{C}\langle 1 \rangle \otimes W_2) \otimes X_1^{\vee} = &(X_1 \otimes \mathbb{C}\langle 2 \rangle \otimes X_1^{\vee}) \oplus \\
    &(X_1 \otimes (W_1\langle 1 \rangle \oplus W_2\langle 1 \rangle \oplus (W_1 \otimes W_2)) \otimes X_1^{\vee}).
  \end{align*}
  The top-left position becomes
  \[
    X_1\otimes (X_1^\vee \otimes_{A_{\Gamma}} X_1)\otimes X_1^\vee \cong (X_1 \otimes \mathbb{C} \otimes X_1^{\vee}) \oplus (X_1 \otimes \mathbb{C}\langle 2 \rangle \otimes X_1^{\vee}).
  \]
  We can now apply Gaussian elimination of complexes (see, e.g.~\cite[Lemma 4.2]{bar:07}).
  The term \(X_1 \otimes \mathbb{C}\langle 2 \rangle \otimes X_1^{\vee}\) in the leftmost position cancels with the same term in the top-left position, while the term \(X_1 \otimes \mathbb{C} \otimes X_1^{\vee} = X_1 \otimes X_1^{\vee}\) in the top-left position cancels with the top-right position.
  After these cancellations, the top row disappears, and we obtain the following reduced complex.
  \begin{center}
    \begin{tikzcd}[row sep=2em, column sep=1.5em]
      X_1 \otimes (W_1\langle 1 \rangle \oplus W_2\langle 1 \rangle \oplus (W_1 \otimes W_2)) \otimes X_1^{\vee} \arrow{r} \arrow{dr}
      & X_2\otimes (\C\langle 1\rangle \oplus W_2)\otimes X_1^\vee \arrow{r} \arrow{dr}
      & X_1\otimes X_1^\vee \arrow{r}
      & A_\Gamma\\
      & X_1\otimes  (\C\langle 1\rangle \oplus W_1) \otimes X_2^\vee \arrow{r} \arrow{ur}
      & X_2\otimes X_2^\vee \arrow{ur}
    \end{tikzcd}
  \end{center}
In the new complex, all of the terms except for the leftmost one are symmetric under exchanging $1$ and $2$.
Setting \(U = W_1\langle 1 \rangle \oplus W_2 \langle 1 \rangle \oplus (W_1 \otimes W_2)\), we see that the left-most terms in $\tau_{X_1}\tau_{X_2}\tau_{X_1}$ and $\tau_{X_2} \tau_{X_1}\tau_{X_2}$ are, respectively, $X_1\otimes U \otimes X_1^\vee$ and \(X_2\otimes U \otimes X_2^\vee\).
Since $U$ is non-trivial and $X_1$ and $X_2$ are not isomorphic, these two terms are different.
We see that
\[\tau_{X_1}\tau_{X_2}\tau_{X_1} \neq \tau_{X_2} \tau_{X_1}\tau_{X_2}.\]
Hence the desired expression
\(\tau_{X_1}\tau_{X_2}\tau_{X_1}\tau_{X_2}^{-1}\tau_{X_1}^{-1}\tau_{X_2}^{-1}\)
is non-trivial, because it is sent to a non-trivial element under the Khovanov--Seidel representation.
\end{proof}

The second version of the criterion (\cref{lem:CatBigelow2}) is especially useful when the graph is highly connected, when there might not be many pairs of braids that commute.
It is interesting to notice that these two criteria combined will detect the fact that the kernel of the Burau representation is bigger than the kernel of the Khovanov--Seidel representation.
When the latter is faithful (in particular in spherical type and affine type \(A\)), this gives an effective criterion to detect the kernel of the Burau representation. This follows from the following lemma.

\begin{lemma} \label{lem:equivalence}
  Assume that a braid $\beta$ acts on $\mathcal{C}_{\Gamma}$ non-trivially, but is in the kernel of the Burau representation. Then there exist $(i,j)\in V(\Gamma)$ such that one of the following holds:
  \begin{itemize}
  \item $\langle \alpha_i,\alpha_j\rangle =0$ and $\dim(\HOM^{\bullet}(\beta P_i,P_j))>0$, in which case \cref{lem:CatBigelow1} applies; or 
  \item $\langle \alpha_i,\alpha_j\rangle=q$ and $\dim(\HOM^{\bullet}(\beta P_i,P_j))>1$, in which case \cref{lem:CatBigelow2} applies.
  \end{itemize}
\end{lemma}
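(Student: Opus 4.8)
The plan is to reduce the statement to a counting argument about the pairing between basic roots, using the fact (from~\cref{subsec:proof-strategies} and the Grothendieck group description) that the pairing $\langle \alpha_i, \alpha_j \rangle$ equals the $(q,-1)$-Euler characteristic of $\HOM^{\bullet,\bullet}(P_i, P_j)$, together with Bigelow-style curve/spherical-object dynamics. First I would observe that if $\beta$ is in the kernel of the Burau representation but acts non-trivially on $\mathcal{C}_\Gamma$, then $\beta$ fixes every class $[P_i] = \alpha_i$ in $V_q$, hence fixes the whole lattice $K_0(\mathcal{C}_\Gamma) = V_q$; in particular $\langle [\beta P_i], [P_j] \rangle = \langle \beta[P_i], \beta[P_j]\rangle \cdot(\text{correction}) = \langle \alpha_i, \alpha_j\rangle$ for all $i,j$, using invariance of the pairing under the braid action. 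So the Euler characteristic $\sum_g q^g \sum_h (-1)^h \dim\HOM^{g,h}(\beta P_i, P_j)$ equals $\langle\alpha_i,\alpha_j\rangle$ on the nose.

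Next I would use the hypothesis that $\beta$ is non-trivial on $\mathcal{C}_\Gamma$ to produce a pair $(i,j)$ with $\beta P_i \not\cong P_i\{\ast\}[\ast]$ in a way detectable by $\HOM$ into some $P_j$. Concretely: since the $\tau_{P_i}$ generate the braid action and $\beta$ is non-trivial, there is some $P_i$ with $\beta P_i$ not isomorphic (up to shift) to $P_i$; I would then argue that a non-trivial spherical object that is not a shifted $P_i$ must have $\HOM^{\bullet,\bullet}(\beta P_i, P_j) \neq 0$ for at least one $j$ — indeed the $P_j$ collectively generate $\mathcal{C}_\Gamma$, so no non-zero object can be orthogonal to all of them. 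Moreover, by comparing the total dimension of $\HOM^{\bullet,\bullet}(\beta P_i, P_j)$ with the Euler characteristic $\langle\alpha_i,\alpha_j\rangle$: if $\langle\alpha_i,\alpha_j\rangle = 0$ but $\HOM^{\bullet,\bullet}(\beta P_i, P_j)\neq 0$, the first bullet holds; if $\langle\alpha_i,\alpha_j\rangle = q$ (the only other value that can occur for $i \neq j$ in the simply-laced case, up to the symmetry $\langle\alpha_j,\alpha_i\rangle = q$), then a single copy of $\mathbb{C}\langle 1\rangle$ already accounts for the whole Euler characteristic, so $\dim\HOM^{\bullet,\bullet}(\beta P_i, P_j) > 1$ exactly when there is extra ``invisible'' homology, i.e. the object genuinely differs from $P_j$; I would show this extra homology must appear for some $j$ precisely because $\beta P_i$ is not a shifted projective.

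The main obstacle is the dichotomy step: I need to guarantee that the failure of $\beta P_i$ to be a shifted $P_i$ is actually \emph{witnessed} at the level of $\HOM$ into the basic projectives with the right numerics — i.e. that non-triviality of the action forces one of the two specific inequalities on $\HOM$-dimensions rather than just ``some $\HOM$ is non-zero''. For this I would lean on the spherical-object structure: $\beta P_i$ is spherical (as an autoequivalence applied to a spherical object), so its $\HOM$-pattern is rigid, and I would use the duality $\HOM^{g,h}(X,Y) \cong \HOM^{2-g,-h}(Y,X)^\vee$ together with the constraint that the Euler characteristic is forced to be $\langle\alpha_i,\alpha_j\rangle$ to pin down that whenever the total dimension exceeds what the Euler characteristic ``predicts with no cancellation'' ($0$ or $1$ respectively), we land in the relevant case. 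If $\beta P_i$ were a shifted projective for \emph{every} $i$, one checks (using that $\beta$ preserves the pairing and the set of spherical objects up to shift, plus faithfulness of the Burau representation on the relevant combinatorial data) that $\beta$ would act trivially on $\mathcal{C}_\Gamma$, contradicting the hypothesis — so some $i$ is ``bad'', and chasing that $i$ through the $\HOM$-into-$P_j$ bookkeeping yields the claimed $(i,j)$.
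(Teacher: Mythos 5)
Your overall frame is the right one and matches the paper's starting point: from $\beta$ being in the kernel you get $[\beta P_i]=[P_i]$ for all $i$, hence $\langle[\beta P_i],[P_j]\rangle=\langle\alpha_i,\alpha_j\rangle$ on the nose, and the dichotomy between the two bullets is governed by whether the total dimension of $\HOM^{\bullet,\bullet}(\beta P_i,P_j)$ exceeds what the Euler characteristic forces ($0$, resp.\ $1$). But the step you flag as ``the main obstacle'' is exactly the content of the lemma, and your proposal does not supply an argument for it. In particular, ``the $P_j$ collectively generate, so some $\HOM^{\bullet,\bullet}(\beta P_i,P_j)\neq 0$'' is not enough: the only such $j$ could be $j=i$ itself, where $\langle\alpha_i,\alpha_i\rangle=1+q^2$ and neither bullet applies. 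Nothing in your sketch rules out the scenario where $\beta P_i$ is a complex built essentially out of shifted copies of $P_i$ alone, which is precisely the hard residual case.

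The paper's proof closes this gap with a specific mechanism you would need to reproduce: filter the complex $\beta P_i$ by the internal (path-length) grading and look at the extreme slices $\bigoplus_k P_{l_k}\{b\}[h_k]$. Identity components out of the bottom slice give nonzero classes in $\HOM^{\bullet,\bullet}(\beta P_i,P_{l_k})$; if some $l_k$ is non-adjacent to $i$ you get bullet one, if adjacent then either $P_{l_k}$ repeats (bullet two) or its class survives in $K_0$ and contradicts $[\beta P_i]=[P_i]$. When all $l_k=i$ one repeats with the top slice, and the case of two distinct internal degrees is killed by producing an odd-homological-degree element of $\END^{\bullet,\bullet}(\beta P_i)$, contradicting sphericality of $\beta P_i$; the final case $\beta P_j=P_j[h_j]$ for all $j$ is excluded by comparing shifts across adjacent vertices and using connectedness of $\Gamma$. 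Your appeal to ``rigidity of the $\HOM$-pattern'' and ``bookkeeping'' gestures at this but contains no substitute for the filtration argument, the sphericality contradiction, or the connectedness step, so as written the proof does not go through.
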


Here by a non-trivial action, we mean that $\beta$ does not act as the composition of homological shifts on each block corresponding to each connected component of $\Gamma$.

\begin{proof}
  Since $\beta$ acts non-trivially on $\calC$, in particular there exists $i\in V(\Gamma)$ so that $\beta P_i\neq P_i$. But since $\beta$ is in the kernel of the Burau representation, $\beta(\alpha_i)=\alpha_i$, which is equivalent to writing the following in $K_0(\calC)$:
  \[
\beta([P_i])=[P_i] \text{ for every }i \in V(\Gamma).
\]
Now, filter the complex $\beta P_i$ by the path length grading, and consider the bottom piece (that comes with highest shift $b$ in grading). One sees a direct sum $\bigoplus_{k}P_{l_k}\{b\}[h_k]$, and $\beta P_i$ looks like:
\[
\beta P_i = \Cone\left(\bigoplus_k P_{l_k}[h_k]\rightarrow X'\right).
\]
with $X'$ made of $P_k$'s shifted by strictly less than $b$ in path length grading.

If for one of the $l_k$'s one has $\langle \alpha_i,\alpha_{l_k}\rangle=0$, then the identity map from $P_{l_k}$ in $X$ to $P_{l_k}$ will generate a non-zero element of $\HOM(X,P_{l_k})$. Thus the dimension of this $\HOM$-space is strictly positive and we are done.

If for one of the $l_k$'s one has $\langle \alpha_i,\alpha_{l_k}\rangle=q$, then again the identity map from $P_{l_k}$ in $X$ to $P_{l_k}$ will generate a non-zero element of $\Hom(X,P_{l_k})$. If $P_{l_k}$ appears more than once (that is, $\exists k'$ with $l_k=l_{k'}$), then the dimension is strictly greater than $2$ and we are done. Otherwise, this means that in $[\beta P_i]$, $q^{b}\alpha_{l_k}$ appears with coefficient $(-1)^{h_k}\neq 0$, which contradicts the fact that $[\beta P_i]=[P_i]$.

This leaves us with the situation where all $l_k=i$. In this case, we do the same process with the top slice, of path length degree $t<b$ (this time, surviving maps will be loop maps). The only case where we cannot conclude is when only $P_i$'s appear as well.

If $\beta P_i$ is not concentrated in a single $q$-degree, that is if $t\neq b$, then any identity map from a $P_i$ in degree $t$ to a $P_i$ in degree $b$ produces a non-zero map in homotopy. One at least of the top and bottom slices must contain two $P_i$'s in different homological grading: if not, both the top and bottom slices would produce non-zero terms in the Grothendieck group, which would contradict $[\beta P_i]=[P_i]$. This means that there is a non-zero map in odd homological degree in $\END^{\bullet,\bullet}(\beta P_i)$, which contradicts the fact that $\END^{\bullet,\bullet}(\beta P_i)\simeq\END^{\bullet,\bullet}(P_i)\simeq \C\oplus \C\langle 2\rangle$.

So the only case left is when $b=t$. Then since the complex is indecomposable, one must have $\beta P_i=P_i\{b\}[h]$, and since $[\beta P_i]=[P_i]$, this determines $b=0$ and $h\in 2\Z$.

The only case where we cannot conclude yet is when $\beta P_j=P_j[h_j]$ for each $j$, with $h_j\in 2\Z$. If $i$ and $j$ are adjacent in $\Gamma$, the fact that $\HOM(P_i,P_j)\simeq\HOM(\beta P_i, \beta P_j)$ lies in homological degree zero implies that $h_i=h_j$. By connectedness, this implies that $h_i=h_{j}$ if $i$ and $j$ belong to the same connected component of $\Gamma$. But then this means that $\beta$ acts trivially, so this case does not happen.
\end{proof}

Our strategy of search goes as follows.
\begin{enumerate}
\item We produce a large set (size approximately 2 million) of $q$-polynomials for generalized curves, simply by acting on elementary curves with generators of the braid group.
  We usually classify these curves by their associated root (namely the value of the \(q\)-polynomial at $q=1$, possibly modulo 2 to get a finite list) and a length criterion, for instance the sum of absolute values of coefficients of the $q$-polynomial.
\item We then record the pairs of \(q\)-polynomials $(\gamma_1,\gamma_2)$ that have trivial pairing (for one or the other version of the criterion).
  We usually restrict the search to specific pairs of roots to limit the size of the calculation, and also try to avoid duplicates.
  For example, the pair $(\beta \gamma_1,\beta \gamma_2)$ would be redundant for any braid \(\beta\), and we try to rule out such cases.
\item Finally, we compute the objects in the category that correspond to $\gamma_1$ and $\gamma_2$ and compute the $\HOM^{\bullet}$-space.
  If this is non-trivial (depending on which version of the criterion we are using), then we have a counterexample.
\end{enumerate}
Our code is available in the Git repository~\cite{BQ_git}.

\section{Random searches over $\Z/p\Z$}\label{sec:bucket}

In finite type (so, in our case, $A_3$ and $D_4$), we adapt the strategy of Gibson, Williamson and Yacobi from~\cite{GibsonWilliamsonYacobi} to a curve-search random algorithm.
Recall that these authors found an explicit braid in the kernel of the Burau representation over $F_5=\Z/5\Z$ by studying the projective length (or \textsf{projlen}) of braids, which is the difference between the top and bottom $q$-degree of the terms of a matrix.
We translate this search in terms of curves and move it from the classical Garside structure to the dual one.

Recall that braid groups of finite type admit a so-called \emph{dual} set of generators, due to Birman-Ko-Lee~\cite{BKL} and Bessis~\cite{Bessis}.
Let \(n\) be the rank of the chosen type.
Fix a Coxeter element $\gamma$, which is the product of the generators $\sigma_i$ in some fixed chosen order.
In practice, we usually take the product in the order given by the index; for instance, $\gamma=\sigma_1\sigma_2\sigma_3\sigma_4$ for $D_4$.
Then consider the following subset of \(B(\Gamma)\):
\[
\mathbb{T} =\{\beta \sigma_i\beta^{-1},\;\beta \in \B(\Gamma)\}.
\]

\begin{definition}
  Define $[1,\gamma]$ to be the set of braids:
  \[
[1,\gamma]:=  \{\tau_1\cdots \tau_k \mid \text{ there exist } \tau_{k+1},\dots,\tau_n \text{ in }\mathbb{T} \text{ such that } \tau_1\cdots \tau_n=\gamma
  \}
  \]
\end{definition}
In other words, we consider prefixes of $\gamma$ written as a minimal length product of lifts of reflections.
It follows from Bessis' work that this definition is meaningful.
Atoms in this set are $\mathbb{T}\cap [1,\gamma]$.

For $\beta_1, \beta_2\in [1,\gamma]$, we will say that \(\beta_1\) \emph{divides} \(\beta_2\), written as $\beta_1 \mid \beta_2$, if:
\begin{align*}
\text{there exists } &\beta_3\in [1,\gamma]\text{ such that } \beta_2=\beta_1\beta_3, \text{ or equivalently, if:}\\
\text{there exists } &\beta_4\in [1,\gamma]\text{ such that } \beta_2=\beta_4\beta_1.
\end{align*}
That is, left and right divisibility are the same.

Associated to the set of generators $[1,\gamma]$ is a normal form (Garside normal form), which in our case allows us to generate non-trivial braids of increasing complexity.
We will first tweak this normal form for it to be adapted to the curve search.
Before doing so, let us slightly change the Burau representation to an equivalent form that is more suited to these new generators. Recall that our choice of Coxeter element \(\gamma\) corresponds to a fixed order on the Artin generators; in this case, by increasing index.
Now, we restrict to finite type, and define the following:
\[
\langle \alpha_i,\alpha_j\rangle_d =\begin{cases}
1+q \;\text{if}\;i=j \\
1\; \text{if}\;i\leq j\;\text{and}\; m_{ij}=3 \\
q\; \text{if}\;j\leq i\;\text{and}\; m_{ij}=3.
\end{cases}
\]

The explicit action of generator thus becomes, for $\alpha \in V_q$:
\begin{align} \label{eq:dualaction}
  \sigma_i(\alpha)&=\alpha-\langle \alpha_i,\alpha\rangle_d\alpha_i \\
  \sigma_i^{-1}(\alpha)&=\alpha-q^{-1}\langle \alpha,\alpha_i\rangle_d\alpha_i. \nonumber
\end{align}

We define the \emph{spread} of a matrix as the difference between the top $q$-degree of all its entries, and the bottom $q$-degree of all of its entries. A nice feature of this new form is that images of elements of $[1,\gamma]$ only have degree $0$ and $1$, and thus spread $0$ (only the $\id$ and $\gamma$) or $1$.

It is proven in \cite{LQ} that the categorical spread controls the Garside length, which is equivalent to the categorical faithfulness of the Burau representation. In this case, the goal is to find braids with non-zero Garside length but spread $0$.
We can obtain all braids from braids in the positive monoid by multiplication by a power of $\gamma$.
Since the action of \(\gamma\) is entirely controlled, we restrict the search to braids in the positive monoid.
In~\cite{GibsonWilliamsonYacobi}, such a search strategy is used with the classical Garside structure, where braids are sampled in buckets indexed by the Garside length and the projective length (the spread of the matrices with the $q$-conventions from the beginning of the paper).

Our goal is to merge this search strategy with the ones based on curves inspired by Bigelow. This stands on the following lemma.

\begin{lemma} \label{lem:samecurve}
  Let \(\Gamma\) be of simply-laced finite type.
  Fix a standard generator $\sigma_i\in B(\Gamma)$.
  Consider a writing $\beta=\beta_n\cdots \beta_1$ such that:
  \begin{itemize}
  \item the writing is a right-greedy normal form for $\beta$, that is: $\beta_i\in [1,\gamma]$ and for every \(\mu \in \mathbb{T}\) such that $\mu \mid \beta_i$, we have $\mu\beta_{i-1}\notin [1,\gamma]$;
  \item $\beta\sigma_i$ is also a right-greedy form for its associated braid element;
  \item $\sigma_i$ does not divide $\beta_1$.
  \end{itemize}
Then $[\beta,\sigma_i]\neq \id$.
\end{lemma}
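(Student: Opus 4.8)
The plan is to show that $\beta\sigma_i \ne \sigma_i\beta$ by proving that these two elements have distinct right-greedy (Garside) normal forms with respect to the dual Garside structure $[1,\gamma]$. Since normal forms are unique, distinct normal forms imply the elements are distinct, hence $[\beta,\sigma_i] = \beta\sigma_i\beta^{-1}\sigma_i^{-1} \ne \id$. Concretely, I would argue that the given writing $\beta_n\cdots\beta_1$ with $\sigma_i$ appended on the right, i.e.\ $\beta_n\cdots\beta_1\sigma_i$, is \emph{already} the right-greedy normal form for $\beta\sigma_i$ — this is essentially the second bullet hypothesis, though one should be careful about whether $\sigma_i$ merges into $\beta_1$ or becomes a new initial factor; the third bullet ($\sigma_i\nmid\beta_1$) is what prevents degenerate merging and pins down the shape of the normal form of $\beta\sigma_i$. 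On the other side, $\sigma_i\beta = \sigma_i\beta_n\cdots\beta_1$, and I would compute its right-greedy normal form and show it cannot coincide with $\beta_n\cdots\beta_1\sigma_i$.

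First I would recall the relevant facts about the dual Garside structure: left and right divisibility in $[1,\gamma]$ agree (stated in the excerpt), so $[1,\gamma]$ is a lattice under this divisibility, and every element of $B(\Gamma)_{\ge 0}$ (or of $B(\Gamma)$ after multiplying by a power of $\gamma$) has a unique right-greedy normal form; uniqueness is the key tool. Next I would set up the comparison: the rightmost factor of the normal form of $\beta\sigma_i$ is determined by the greedy condition — it is the largest element of $[1,\gamma]$ dividing $\beta\sigma_i$ on the right that is compatible with greediness of the remaining factors. Using the hypotheses, this rightmost factor is $\beta_1$ (when $\sigma_i\beta_1$ leaves $[1,\gamma]$) or $\sigma_i$-absorbed-into-$\beta_1$ in a controlled way; in either case the \emph{length} and the precise factor data differ from what one gets for $\sigma_i\beta$, whose rightmost factor is still essentially governed by $\beta_1$ but whose \emph{leftmost} factor is inflated by $\sigma_i$. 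The cleanest route is a length/invariant argument: appending $\sigma_i$ on the right of an already-greedy word that does not absorb it increases the canonical length (or changes the final factor), whereas prepending $\sigma_i$ on the left changes the initial factor; if these two operations produced the same element, one could cancel and derive a relation forcing $\sigma_i$ to commute with $\beta$ in a way contradicting $\sigma_i\nmid\beta_1$ together with greediness.

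I would carry the steps out in this order: (1) recall uniqueness of right-greedy normal form in the dual Garside monoid and the equivalence of left/right divisibility; (2) use the three hypotheses to identify the right-greedy normal form of $\beta\sigma_i$ precisely (noting the two sub-cases for how $\sigma_i$ interacts with $\beta_1$, both controlled by $\sigma_i\nmid\beta_1$); (3) write $\sigma_i\beta$ and observe its right-greedy normal form must have rightmost factor dividing $\beta_1$ but with the extra $\sigma_i$ appearing on the \emph{left} side, not absorbable into $\beta_n$ in general; (4) compare the two normal forms — either their canonical lengths differ, or their factor sequences differ in the last slot — and conclude they are unequal, hence $\beta\sigma_i\ne\sigma_i\beta$.

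The main obstacle I anticipate is Step (2)–(3): controlling exactly how the single generator $\sigma_i$ interacts with the greedy normal form when appended on the right versus prepended on the left. Appending on the right is benign because of hypotheses two and three, but I must rule out the possibility that $\sigma_i\beta$, after renormalizing, "accidentally" coincides with $\beta\sigma_i$ — a priori conjugation by $\beta$ could permute the factor structure in a subtle way. The way to close this gap is to track a genuine invariant (canonical length, or the rightmost normal-form factor, or the image under the dual Burau action \eqref{eq:dualaction} which has spread at most $1$ on $[1,\gamma]$) that behaves differently under the two operations; the hypothesis that $\sigma_i\nmid\beta_1$ is precisely what guarantees the right-side append is nontrivial in this invariant, and greediness of $\beta$ is what prevents the left-side prepend from telescoping away.
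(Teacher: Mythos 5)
Your overall strategy --- distinguishing $\beta\sigma_i$ from $\sigma_i\beta$ by comparing their right-greedy normal forms in the dual Garside structure and invoking uniqueness --- is viable, and it is a genuinely different route from the paper's, which simply asserts that $\beta\sigma_i\beta^{-1}\sigma_i^{-1}$ is already a reduced minimal decomposition and defers the content to the spread-equals-Garside-length results of \cite{LQ}. However, as written your argument stops exactly at the step you yourself flag as ``the main obstacle'': you list three candidate invariants (canonical length, last factor, spread) without committing to one, and the vague claim that equality would ``derive a relation forcing $\sigma_i$ to commute with $\beta$'' is circular, since $\beta\sigma_i=\sigma_i\beta$ is precisely the statement being assumed. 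There is also a directional error in your step (3): the rightmost factor of the right-greedy normal form of a positive element is its \emph{maximal} simple right-divisor, so every right-divisor lying in $[1,\gamma]$ divides that factor; hence $\beta_1$ divides the rightmost factor of the normal form of $\sigma_i\beta$, not the other way around. Finally, your worry about $\sigma_i$ ``merging into $\beta_1$'' is moot: the second hypothesis states outright that $\beta_n\cdots\beta_1\sigma_i$ \emph{is} a right-greedy normal form of $\beta\sigma_i$, so there are no sub-cases.

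The gap is closable along your lines in a few sentences. Suppose $\beta\sigma_i=\sigma_i\beta$. By the second hypothesis and uniqueness of the right-greedy normal form, the rightmost greedy factor of this common element is $\sigma_i$. On the other hand, $\sigma_i\beta=\sigma_i\beta_n\cdots\beta_1$ exhibits $\beta_1\in[1,\gamma]$ as a right-divisor of this positive element of the dual monoid, so $\beta_1$ must divide the rightmost greedy factor, namely $\sigma_i$. Since reflection length is additive on $[1,\gamma]$ and $\sigma_i$ is an atom, its only divisors are $1$ and $\sigma_i$; the factor $\beta_1$ of a normal form is nontrivial, and $\beta_1=\sigma_i$ is excluded because $\sigma_i$ does not divide $\beta_1$. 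This is a contradiction, so $[\beta,\sigma_i]\neq\id$. Once completed this way, your argument is actually more self-contained than the paper's one-line proof, which relies on Propositions 4.6, 4.11 and Corollary 4.10 of \cite{LQ}; but in its current form the decisive step is missing.
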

\begin{proof}
  Writing $[\beta,\sigma_i]=\beta\sigma_i\beta^{-1}\sigma_i^{-1}$ already gives a reduced minimal decomposition.
\end{proof}

It should be emphasized that this writing for $\beta$ is precisely designed to ensure that the categorical spread equals the Garside length: this follows from~\cite[Prop 4.6 and Cor 4.10]{LQ}. Then the non-triviality is stated in Proposition 4.11. We will use the previous lemma to find elements in the kernel of the Burau representation with coefficients in rings $F=\Z/p\Z[q,q^{-1}]$, as stated below.

\begin{corollary} \label{cor:Bigelow3}
Let $F$ be a unital ring, and $\phi$ a unital ring homomorphism from $\Z[q,q^{-1}]$ to $F$. Assume that $\sigma_i$ and $\beta$ are as above, and furthermore that $\phi(\beta)\alpha_i=q^l\alpha_i$ (in $F\otimes_{\phi}V_q$). Then $[\beta,\sigma_i]$ is a non-trivial element in the kernel of the Burau representation with coefficients in $F$.
\end{corollary}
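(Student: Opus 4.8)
The plan is to combine the purely group-theoretic non-triviality statement of \cref{lem:samecurve} with the Bigelow-type observation that a commutator of a braid with a generator it ``fixes'' at the level of roots lies in the kernel of the Burau representation. Concretely, I would argue as follows.

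\textbf{Step 1: non-triviality in the braid group.} The hypotheses on $\beta$ and $\sigma_i$ are exactly those of \cref{lem:samecurve}, so we already know $[\beta,\sigma_i]=\beta\sigma_i\beta^{-1}\sigma_i^{-1}\neq \id$ in $B(\Gamma)$. I would stress that this non-triviality is genuinely combinatorial (via the reduced right-greedy normal form) and makes no reference to the ring $F$; this is the part we import wholesale.

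\textbf{Step 2: the commutator is in the kernel over $F$.} Here I would use the hypothesis $\phi(\beta)\alpha_i = q^l\alpha_i$ in $F\otimes_\phi V_q$. Writing the Burau action of $\sigma_i$ and $\sigma_i^{-1}$ using \eqref{eq:dualaction}, the key point is that $\sigma_i$ and $\sigma_i^{-1}$ act on any vector only by modifying the $\alpha_i$-coefficient, and the formula depends on the input vector only through its pairing with $\alpha_i$. Since $\phi(\beta)$ rescales $\alpha_i$ by the unit $q^l$, the operator $\phi(\beta)$ conjugates the action of $\sigma_i$ on $F\otimes_\phi V_q$ to an operator that differs from the action of $\sigma_i$ only by rescaling inside the (already one-dimensional span of $\alpha_i$) image. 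I would make this precise: for any $v$, compute $\phi(\beta\sigma_i\beta^{-1})(v)$ and show it equals $\sigma_i(v)$, using that $\langle \alpha_i, \phi(\beta)^{-1}v\rangle_d = \langle \phi(\beta)\alpha_i, v\rangle_d \cdot(\text{unit adjustment}) = q^{\pm l}\langle q^l\alpha_i, v\rangle_d$ collapses correctly by sesquilinearity in $q$ — so that $\phi(\beta)\sigma_i\phi(\beta)^{-1}$ and $\sigma_i$ agree as linear maps on $F\otimes_\phi V_q$. Then $[\beta,\sigma_i] = (\beta\sigma_i\beta^{-1})\sigma_i^{-1}$ acts as the identity, i.e. it lies in the kernel of the Burau representation over $F$.

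\textbf{Step 3: conclusion.} Combining Steps 1 and 2: $[\beta,\sigma_i]$ is non-trivial in $B(\Gamma)$ but acts trivially on $F\otimes_\phi V_q$, hence is a non-trivial element of the kernel of the Burau representation with coefficients in $F$, as claimed.

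The main obstacle I anticipate is Step 2, specifically bookkeeping the sesquilinearity of the pairing $\langle-,-\rangle_d$ with respect to $q$ and the interaction with the homomorphism $\phi$ and the scalar $q^l$: one must check that the scalar $q^l$ (which need not be central-looking under $\phi$ but is a unit) and the $q^{\pm 1}$ twists in \eqref{eq:dualaction} cancel so that conjugation by $\phi(\beta)$ really returns $\sigma_i$ on the nose, not just up to a rescaling that might be visible on $\alpha_i$. Once the identity $\phi(\beta)\sigma_i\phi(\beta)^{-1} = \sigma_i$ on $F\otimes_\phi V_q$ is established, the rest is immediate. I would also remark that this is the ``$\gamma_1,\gamma_2$ equal in $V_q$'' incarnation of Bigelow's criterion advertised in \cref{subsec:proof-strategies}: the two curves are $\sigma_i P_i$-type objects related by $\beta$, and their equality in the Grothendieck group is exactly $\phi(\beta)\alpha_i = q^l\alpha_i$.
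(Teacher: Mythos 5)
Your proposal is correct and follows essentially the same route as the paper: the paper likewise writes $[\beta,\sigma_i]=(\beta\sigma_i\beta^{-1})\sigma_i^{-1}$, observes that $\beta\sigma_i\beta^{-1}$ acts as the twist along $\beta\alpha_i=q^l\alpha_i$ so that the $q^{\pm l}$ factors cancel by sesquilinearity and the action coincides with that of $\sigma_i$, and invokes \cref{lem:samecurve} for non-triviality in the braid group. The sesquilinearity bookkeeping you flag as the main obstacle is exactly the one-line cancellation $\langle q^l\alpha_i,\alpha\rangle_d\,(q^l\alpha_i)=q^{-l}q^l\langle\alpha_i,\alpha\rangle_d\,\alpha_i$ in the paper's proof, so there is no gap.
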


\begin{proof}
  The commutator $[\beta,\sigma]=(\beta\sigma_i\beta^{-1})\sigma_{i}^{-1}$ is a product of spherical twists over spherical objects that are not distinguished by the Burau representation with coefficients in $F$. We will check that the induced Burau action is trivial. The fact that the braid is non-trivial is a consequence of~\cref{lem:samecurve}.

  We will use an easy generalization of~\cref{eq:dualaction}: for $\tau=\beta\sigma_i\beta^{-1}$ and $\alpha\in V_q$, we have:
  \begin{align*}
    \tau(\alpha)&=\alpha-\langle\beta \alpha_i,\alpha_d(\beta\alpha_i) \\
    \tau^{-1}(\alpha)&=\alpha-q^{-1}\langle\alpha,\beta\alpha_i\rangle_d (\beta\alpha_i)
  \end{align*}
  The point of these formulas is to emphasize the role of a curve played by $\beta\alpha_i$ with respect to the twist $\tau=\beta\sigma_i\beta^{-1}$.
  
Write $[\beta,\sigma_i]=(\beta\sigma_i\beta^{-1})\sigma_i^{-1}$ and focus on $\beta\sigma_i\beta^{-1}$, whose action is thus given by:
  \[
  \beta\sigma_i\beta^{-1}(\alpha)=\alpha-\langle\beta \alpha_i,\alpha\rangle_d(\beta\alpha_i).
  \]
  Recall that $\beta\alpha_i=q^{l}\alpha_i$, so that:
  \begin{align*}
      \beta\sigma_i\beta^{-1}(\alpha)&=\alpha-q^{-l}\langle\alpha_i,\alpha\rangle_dq^l\alpha_i=\alpha-\langle \alpha_i,\alpha\rangle_d\alpha_i.
  \end{align*}
  This is the same action as $\sigma_i$, and thus $(\beta\sigma_i\beta^{-1})\sigma_i^{-1}$ has trivial Burau action.
\end{proof}

This idea yielded several unfaithfulness results in type $D_4$. Note that unfaithfulness over $F_p$ for $p=2$, $3$ or $5$ follows from type $A$ embedding.
\begin{theorem}\label{thm:d4-unfaithful-finite-rings}
  The Burau representation of type $D_4$ is not faithful over $\Z/p\Z$ with $p\leq 16$.
\end{theorem}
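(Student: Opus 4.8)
The plan is to exhibit, for each prime $p \leq 16$ (equivalently $p \in \{2, 3, 5, 7, 11, 13\}$), an explicit braid $\beta_p \in B(D_4)$ together with a standard generator $\sigma_{i_p}$ satisfying the hypotheses of \cref{cor:Bigelow3} over $F = \mathbb{Z}/p\mathbb{Z}[q,q^{-1}]$. As noted in the statement preceding the theorem, for $p = 2, 3, 5$ the result already follows from the embedding of $B(A_3)$ (resp.\ $B(A_4)$) into $B(D_4)$ as a full parabolic together with the classical unfaithfulness of Burau over these small rings; so the genuinely new content is $p \in \{7, 11, 13\}$, and I would present those cases via the random bucket search described in~\cref{sec:bucket}.

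\medskip

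First I would run the curve-search random algorithm adapted from~\cite{GibsonWilliamsonYacobi}: fix the Coxeter element $\gamma = \sigma_1\sigma_2\sigma_3\sigma_4$ for $D_4$, work in the positive monoid (recovering all braids by multiplying by powers of $\gamma$, whose action is controlled), and sample positive braids $\beta$ in buckets indexed by Garside length and by the spread of the associated Burau matrix in the dual conventions of~\eqref{eq:dualaction}. For each sampled $\beta$ I would check whether there is a generator $\sigma_i$ with $\sigma_i \nmid \beta_1$ (the last term of the right-greedy normal form) such that $\beta\sigma_i$ is still right-greedy, so that \cref{lem:samecurve} guarantees $[\beta,\sigma_i] \neq \mathrm{id}$. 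Then I would reduce the image $\phi(\beta)$ modulo $p$ and test whether $\phi(\beta)\alpha_i = q^{l}\alpha_i$ for some $l$; concretely, since $[\beta,\sigma_i]$ automatically lies in the kernel of the \emph{integral} Burau representation only when $\beta\alpha_i = q^l\alpha_i$ over $\mathbb{Z}$, the point of reducing mod $p$ is that $\phi(\beta)$ may scale $\alpha_i$ even when $\beta$ does not do so integrally. Once such a pair $(\beta, \sigma_i)$ is found, \cref{cor:Bigelow3} immediately gives that $[\beta,\sigma_i]$ is a non-trivial element of the kernel of the Burau representation of $D_4$ over $\mathbb{Z}/p\mathbb{Z}$.

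\medskip

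To make the theorem self-contained I would then record the explicit braid words $\beta_7, \beta_{11}, \beta_{13}$ (and the chosen generators) found by the search, and verify by direct matrix computation over $\mathbb{Z}/p\mathbb{Z}[q,q^{-1}]$ the single identity $\phi(\beta_p)\alpha_{i_p} = q^{l_p}\alpha_{i_p}$, together with the combinatorial checks (right-greedy normal form, and $\sigma_{i_p}$ does not divide the final Garside factor) needed to invoke \cref{lem:samecurve}. These are finite, machine-checkable calculations; I would cite the repository~\cite{BQ_git} for the data and reproduce the key verifications in a table. The verification that $\beta_p \sigma_{i_p}$ is again right-greedy is the one combinatorial step that is not purely linear-algebraic, but it reduces to comparing a few divisibility relations in the non-crossing partition lattice of $D_4$, which is small enough to handle by hand or by Sage.

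\medskip

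The main obstacle is not logical but computational: the bucket search must actually \emph{hit} a braid witnessing the scaling $\phi(\beta)\alpha_i = q^l\alpha_i$ mod $p$, and for the larger primes $p = 11, 13$ the braids that work appear to have substantially higher Garside length, so the search space grows quickly and one must tune the bucketing heuristics (prioritising low-spread, high-Garside-length buckets) to find them in reasonable time. A secondary subtlety is that $\mathbb{Z}/p\mathbb{Z}$ is a field for prime $p$ but the statement is phrased for all $p \leq 16$; for composite $p$ (namely $p = 4, 6, 8, 9, 10, 12, 14, 15, 16$) I would either observe that unfaithfulness over $\mathbb{Z}/p\mathbb{Z}$ follows from unfaithfulness over $\mathbb{Z}/q\mathbb{Z}$ for any prime $q \mid p$ via the quotient map $\mathbb{Z}/p\mathbb{Z} \twoheadrightarrow \mathbb{Z}/q\mathbb{Z}$ (so that the same kernel element survives), or note that the ring homomorphism hypothesis of \cref{cor:Bigelow3} is flexible enough to target these rings directly; in either case no new search is needed beyond the prime cases.
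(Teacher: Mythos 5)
Your strategy for the prime cases coincides with the paper's: fix the Coxeter element $\gamma=\sigma_1\sigma_2\sigma_3\sigma_4$, bucket-search positive braids by Garside length and spread in the dual conventions of~\eqref{eq:dualaction}, use \cref{lem:samecurve} for non-triviality and \cref{cor:Bigelow3} for membership in the kernel, and record explicit witnesses found by machine (the paper always takes $\sigma_i=\sigma_1$). That part is fine.

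There is, however, a genuine gap in your treatment of composite $p$, and it matters because the theorem concerns all integers $p\le 16$, not only primes. You claim that unfaithfulness over $\Z/p\Z$ follows from unfaithfulness over $\Z/\ell\Z$ for a prime $\ell\mid p$ via the quotient map $\Z/p\Z\twoheadrightarrow\Z/\ell\Z$, ``so that the same kernel element survives.'' The implication runs the other way: composing with that quotient map exhibits the mod-$\ell$ Burau representation as a further quotient of the mod-$p$ one, so the kernel over $\Z/p\Z$ is \emph{contained in} the kernel over $\Z/\ell\Z$. A braid whose Burau matrix is the identity mod $\ell$ need not have identity matrix mod $p$, so a mod-$\ell$ kernel element does not lift. (For a prime power $p=\ell^k$ one could salvage the idea by raising a mod-$\ell$ kernel element to the power $\ell^{k-1}$, since $(I+\ell M)^{\ell^{k-1}}\equiv I \bmod \ell^k$; but that is not what you wrote, and it still leaves $p=6,10,12,14,15$ needing an extra argument that the kernels modulo the distinct prime factors intersect non-trivially.) The paper does not reduce the composite cases to the prime ones at all: it runs a separate search and exhibits an explicit braid word $\beta$ for each $p$ from $6$ to $16$, invoking the type-$A$ embedding only for $p=2,3,5$. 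As written, your proposal would establish the theorem only for $p\in\{2,3,5,7,11,13\}$.
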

\begin{proof}
  Take $\sigma_1=\sigma_1$ and $\beta$ as follows. Rather than using the classical generators $\sigma_i$, in the interest of space, we will only provide the sequence of indices. $i$ stands for $\sigma_i$, while $-i$ stands for $\sigma_i^{-1}$.

  For $p=6$, we take:
  \begin{align*}
    [&\scriptstyle -2, -1, 2, -4, -3, 1, -2, -1, 3, 4, -2, -1, -3, -4, 2, -3, -2, 2, -4, -2, -4, -3, 1, -2, -1, 3, 4, -1, -3, -4, -3, -4, -2, 4, 3, -2, 1, 2, -3, 
      \\&
      \scriptstyle
      -2, -1, 1, 2, -4, -2, -1, -3, -4, -2, 4, 3, 2, -4, -3, 1, -2, -1, 3, 4, -2, -3, -4, -2, 4, 3, 2, -3, -2, 2, -4, -2, -4, -3, 1, -2, -1, 3, 4]
  \end{align*}

  For $p=7$ we take:
  \begin{align*}
    [&\scriptstyle 2, -3, -4, -2, 4, 3, 2, -3, -2, 2, -4, -3, 1, -2, -1, 3, 4, -2, -1, -3, -4, -2, 2, -3, -2, 1, 2, -4, -2, -1, -1, -3, 1, -2, -1, 2, -3, -2, 2, -4,
      \\& \scriptstyle
       -2,-4, -3, 1, -2, -1, 3, 4, 2, -4, -3, 1, -2, -1, 3, 4, -2, 1, -2, -1, -1, 2, -4, -3, 1, -2, -1, 3, 4, -2, -3, 1, -2, -1, 2, -3, -2, 2, -4, -2,
      \\& \scriptstyle
       -4,-3, 1, -2, -1, 3, 4, 1, 2, -3, -2, -1, 1, 2, -4, -2, -1]
    \end{align*}

  For $p=8$, we take:
  \begin{align*}
    [& \scriptstyle -2, -1, -3, -4, -2, -1, -4, 2, -3, -2, 2, -4, -3, 1, -2, -1, 3, 4, -2, 1, -2, -1, -3, -4, -2, 4, 3, -2, 1, 2, -3, -2,-1, 1, 2, -4, -2, -1, -1,
      \\&
      \scriptstyle
      -3, -4, -2, 1, 2, -3, -2, -1, 1, 2, -4, -2, -1, -1, -3, -4, 2, -3, -2, 2, -4, -2, -4, -3, 1, -2, -1, 3, 4, -2, 1, 2, -3, -2, -1, 1, 2, -4, -2, 
            \\&
      \scriptstyle
      -1,-3, -4, -2, 4, 3, 2, -4, -3, 1, -2, -1, 3, 4, -2, -1, -3, 1, 2, -4, -2, -1]
  \end{align*}

  For $p=9$, we find:
  \begin{align*}
    [& \scriptstyle
      -2, -1, -3, -4, -2, 4, 3, -2, 2, -4, -3, 1, -2, -1, 3, 4, -2, -2, 2, -4, -3, 1, -2, -1, 3, 4, -2, 1, -2, -1, -1, -4, -3, 1, -2, -1, 3, 4, 2, -3,
           \\&
      \scriptstyle
      -2, 2, -4, -2, -4, -3, 1, -2, -1, 3, 4, 1, -2, -1, 2, -3, -2, 2, -4, -2, -4, -3, 1, -2, -1, 3, 4, 2, -4, -3, 1, -2, -1, 3, 4, -2, -1, -3, -4, 2, 
           \\&
      \scriptstyle
      -4,-3, 1, -2, -1, 3, 4, -2, -1, -3, -4, -3, -4, -2, 4, 3, -2, 1, 2, -3, -2, -1, 1, 2, -4, -2, -1, -3, -4, -2, 4, 3, -2, 1, 2, -3, -2, -1, 1, 2, 
      \\&
      \scriptstyle
      -4, -2,-1, 1, -2, -1, -3, -4, 2, -3, -2, 2, -4, -2, -4, -3, 1, -2, -1, 3, 4, 1, 2, -3, -2, -1, 1, 2, -4, -2, -1, 2, -3, -2, 2, -4, -2, -2, 1, 2, 
      \\&
      \scriptstyle
      -3, -2, -1, 1, 2, -4, -2, -1]
    \end{align*}

For $p=10$ we find:
\begin{align*}
  [&\scriptstyle
    2, 3, -1, -4, -2, 2, -4, -3, 1, -2, -1, 3, 4, -2, -2, 1, 2, -3, -2, -1, 1, 2, -4, -2, -1, -1, 1, -2, -1, 2, -3, -2, 2, -4, -2, -4, 
    \\&
    \scriptstyle
    -3, 1, -2, -1, 3,4, 2, -4, -3, 1, -2, -1, 3, 4, -2, 1, 2, -3, -2, -1, 1, 2, -4, -2, -1, -1, -3, -4, -2, 2, -4, -3, 1, -2, -1, 3, 4, 
    \\&
    \scriptstyle
   -2,  -1, -3,1, 2, -4, -2, -1, -1, -3, 1, -2, -1, -1, -3, -4, 2, -3, -2, 2, -4, -2, -4, -3, 1, -2, -1, 3, 4, -1, 1, -2, -1, -4, 2, 
    \\&
    \scriptstyle
    -3, -2, 2, -4, -3, 1, -2,-1, 3, 4, -2]
  \end{align*}

For $p=11$, we take:
\begin{align*}  [& \scriptstyle
    2, 1, 3,-4, 1, -2, -1,-1, 1, 2,-4,-2,-1,-1,1, -2, -1, 2, -3, -2, 2, -4, -2, -4, -3, 1, -2, -1, 3, 4, -2, 2, -3, -2, 1, 2, -4, 
    \\&
    \scriptstyle
    -2, -1, -3, 1,-2,-1,-3,-4, -2, 4, 3, 2, -4, -3, 1,-2, -1, 3, 4, -2, -3, 2, -3, -2, -2, 2, -3, -2, 2, -4, -2, -2, 2, -3, -2, 1, 
    \\&
    \scriptstyle
    2, -4, -2, -1,-2,1,2,-3,-2,-1,1,2,-4,-2,-1,-1,-3,-4,2,-3,-2,-4, -3,1,  -2,-1,3,4, 1,2,-3,-2,-1,-3,-4,
   \\&
   \scriptstyle
   -2,4,3,2,-4,-2,-4,-3, 1, -2,-1,3,4,-3,1,2,-4,-2,-1,1,2,-3,-2,-1,1,2,-4,-2,-1]
  \end{align*}

For $p=12$ we find:
\begin{align*}
  [&\scriptstyle
    2, 1, 3, 4, 2, -3, -2, 2, -4, -2, 2, -4, -3, 1, -2, -1, 3, 4, -2, 1, -2, -1, -1, -3, -4, 2, -3, -2, 2, -4, -2, -4, -3, 1, -2, -1, 3, 4,
    \\& \scriptstyle
    2, -4, -3, 1,-2, -1, 3, 4, -2, 1, -2, -1, -1, -3, -4, 2, -3, -2, 2, -4, -2, -4, -3, 1, -2, -1, 3, 4, 2, -4, -3, 1, -2, -1, 3, 4, -2,
    \\& \scriptstyle
    1, -2, -1]
  \end{align*}

For $p=13$ we find:
\begin{align*}
  [&\scriptstyle
    -2, -1, -4, -3, -2, -1, -2, 1, 2, -3, -2, -1, 1, 2, -4, -2, -1, -1, -3, -2, 1, 2, -3, -2, -1, -1, -4, -3, 2, -4, -4, -2, -4, -3, 1,
    \\& \scriptstyle
     -2, -1, 3, 4, -1,-3, -4, 2, -3, -2, 2, -4, -2, -4, -3, 1, -2, -1, 3, 4, -1, 1, 2, -3, -2, -1, 1, 2, -4, -2, -1, -2, 1, 2, -3, -2, -1, 1, 2,
    \\& \scriptstyle
    -4, -2, -1, 1, -2, -1, -1, 1, 2, -4,-2, -1, 2, -3, -2, 1, 2, -4, -2, -1, 1, -2, -1, -4, 2, -3, -2, -4, 2, -3, -2, 2, -4, -3, 1, -2, -1,
    \\& \scriptstyle
    3, 4, -2]
\end{align*}

For $p=14$ we find:
\begin{align*}
  [&\scriptstyle
    -2, -1, -1, -3, -4, -4, 2, -3, -2, -4, 2, -3, -2, 2, -4, -3, 1, -2, -1, 3, 4, -2, -1, -3, -3, -4, -2, 4, 3, 2, -4, -3, 1, -2, -1, 3, 4, 
    \\ & \scriptstyle
     -2,2, -4, -2,1, 2, -3, -2, -1, -1, -3, -4, -4, 2, -3, -2, -1, 2, -3, -2, 2, -4, -2, -1, -2, 1, 2, -3, -2, -1, 1, 2, -4, -2, -1, 1,
    \\ & \scriptstyle
      -2,-1, -1, -3, -4, 2, -3, -2, 2, -4, -2, -4,-3, 1, -2, -1, 3, 4, -4, -2, 1, 2, -3, -2, -1, -2, 1, 2, -3, -2, -1, 1, 2, -4, -2, -1,
    \\& \scriptstyle
       1, -2, -1, -4,2, -3, -2, 2, -3, -2, 1, 2,-4, -2, -1, -2, 1, 2, -3, -2, -1,1, 2, -4, -2, -1, -3, -4, 1, -2, -1, -1, -3, -4, 2, -4, 
    \\ & \scriptstyle
    -2, -4, -3, 1, -2, -1, 3, 4, -2, 1, 2, -4, -2, -1, -3, -3, 1, -2, -1, -3, -4, -2, 4, 3, 2, -4, -2, -3, -2,1, 2, -4, -2, -1, -3, -2, 1, 
    \\ & \scriptstyle
    2, -4, -2, -1, -3, -2, 1, 2, -4, -2, -1, -3, -2, 1, 2, -4, -2, -1, -4, 2, -3, -2, -1, -2, 1, 2, -3, -2, -1, 1, 2, -4, -2, -1, -1, -4,
    \\& \scriptstyle
    2, -4, -2, -1, -3, -4, 2, -3, -2, 2, -4, -2, -1, -2, 1, 2, -3, -2, -1, 1, 2, -4, -2, -1, -1, -3, -4, 2, -4, -3, 1, -2, -1, 3, 4, -2,
    \\ & \scriptstyle
     -4, 2, -4, -3, 1, -2, -1, 3,4, -2, 1, -2, -1, 2, -3, -2, 2, -4, -2, -4, -3, 1, -2, -1, 3, 4, 1, -2, -1, -4, -3, 1, -2, -1, 3, 4, 1,2,
    \\ & \scriptstyle
     -3, -2, -1, -3, -4, -2, 4, 3, 2, -3, -2, -4, -3, 1, -2, -1,3, 4, -1, -3, -4, 2, -3, -2, 2, -4, -2, -4, -3, 1, -2, -1, 3, 4, -4, 1,-2,
    \\ &  \scriptstyle
     -1, -1, -3, -4, 2, -4, -2, -4, -3, 1, -2, -1, 3, 4, -1, -3, 2, -4, -3, 1, -2, -1, 3, 4, -2]
\end{align*}

For $p=15$ we find:
\begin{align*}
  [&\scriptstyle
    2, 3, 4, -2, 1, 2, -3, -2, -1, 1, 2, -4, -2, -1, -3, 1, -2, -1, -3, 1, -2, -1, -3, -4, -2, 4, 3, 1, 2, -4, -2, -1, -3, -4, -2, 4, 3, -1, -2, 1, 2,
    \\ & \scriptstyle
     -3, -2,-1, 1, 2, -4, -2, -1, 1, -2, -1, -1, -3, -4, -3, -4, -2, 4, 3, -4, -3, 1, -2, -1, 3, 4, -4, -3, 1, -2, -1, 3, 4, -1, 2, -3, -2, 2, -4,
    \\ & \scriptstyle
     -2, -1, -1, -3, -4, 2, -4, -3, 1,-2, -1, 3, 4, -2, -1, -3, -4, 2, -4, -3, 1, -2, -1, 3, 4, -2, 1, 2, -3, -2, -1, 1, 2, -4, -2, -1, -4, -2, -1,
    \\ & \scriptstyle
     -1, -3, -4, 2, -4, -2, -4, -3, 1, -2, -1, 3, 4, -1, -3, 2,-4, -3, 1, -2, -1, 3, 4, -2]
\end{align*}

For $p=16$ we find:
\begin{align*}
  [&\scriptstyle
    -2, -1, -3, -4, -2, 4, 3, -2, 2, -4, -3, 1, -2, -1, 3, 4, -2, -2, 2, -4, -3, 1, -2, -1, 3, 4, -2, 1, -2, -1, 2, -3, -2, 2, -4, -2, -4, -3, 1,
    \\& \scriptstyle
     -2,-1,3, 4, 1, 2, -3, -2, -1, 1, 2, -4, -2, -1, -1, -4, -3, 1, -2, -1, 3, 4, 2, -4, -3, 1, -2, -1, 3, 4, -2, 1, -2, -1, -1, -3, -4, 2, -3,
    \\& \scriptstyle
     -2, 2, -4, -2, -4, -3,1, -2, -1, 3, 4, -2, 1, 2, -3, -2, -1, 1, 2, -4, -2, -1, -1, -3, -4, -1, -3, -4, 2, -3, -2, 2, -4, -2, -4, -3, 1, -2,
    \\& \scriptstyle
     -1, 3, 4, -2, 1, 2, -3, -2, -1, 1, 2, -4, -2, -1, -1, -3, -4, -3, -4, -2, 4, 3, 2, -4, -3, 1, -2, -1, 3, 4, -2, 1, -2, -1, -1, -4, -3, 1, 
    \\& \scriptstyle
    -2, -1, 3, 4, -1, -4, -3, 1, -2, -1, 3, 4, 2, -3, -2, 2,-4, -2, -4, -3, 1, -2, -1, 3, 4, 1, 2, -3, -2, -1, 1, 2, -4, -2, -1, 2, -3, -2, 2,
    \\& \scriptstyle
    -4, -2, -2, 1, 2, -3, -2, -1, 1, 2, -4, -2, -1]
\end{align*}

\end{proof}

These results can be recovered by running the code available on~\cite{BQ_git}.

\bibliographystyle{amsalpha}
\providecommand{\bysame}{\leavevmode\hbox to3em{\hrulefill}\thinspace}
\providecommand{\MR}{\relax\ifhmode\unskip\space\fi MR }
\providecommand{\MRhref}[2]{\href{http://www.ams.org/mathscinet-getitem?mr=#1}{#2}
}
\providecommand{\href}[2]{#2}

\end{document}